\newcommand{\dd}{\;\mathrm{d}}
\theoremstyle{thmstyleone}%
\newtheorem{theorem}{Theorem}[section]%
\newtheorem{proposition}[theorem]{Proposition}%
\newtheorem{lemma}[theorem]{Lemma}
\theoremstyle{thmstyletwo}%
\newtheorem*{remark}{Remark}%
\theoremstyle{thmstylethree}%
\begin{document}

\title[Finite-time blowup in indirect chemotaxis]{Finite-time blowup in a parabolic-parabolic-elliptic chemotaxis model involving indirect signal production}

\author*[1]{\fnm{Xuan} \sur{Mao}}\email{20250611@hhu.edu.cn}

\author[2]{\fnm{Yuxiang} \sur{Li}}\email{lieyx@seu.edu.cn}
\equalcont{These authors contributed equally to this work.}

\affil*[1]{\orgdiv{School of Mathematics}, \orgname{Hohai University}, \orgaddress{ \city{Nanjing} \postcode{211100}, \country{China}}}

\affil[2]{\orgdiv{School of Mathematics}, \orgname{Southeast University}, \orgaddress{ \city{Nanjing} \postcode{211189}, \country{China}}}

\abstract{This paper is concerned with a three-component chemotaxis model accounting for indirect signal production, 
reading as $u_t = \nabla\cdot(\nabla u - u\nabla v)$, 
$v_t = \Delta v - v + w$ and $0 = \Delta w - w + u$, 
posed in a ball of $\mathbb R^n$ with $n\geq5$, 
subject to homogeneous Neumann boundary conditions.
The system is a Nagai-type variant of its fully parabolic version that has a four-dimensional critical mass phenomenon concerning blowup in finite or infinite time 
according to the seminal works of Fujie and Senba [J. Differential Equations, 263
(2017), 88--148; 266 (2019), 942--976]. 
We prove that for any prescribed mass $m > 0$, there exist radially symmetric and positive initial data $(u_0,v_0)\in C^0(\overline{\Omega})\times C^2(\overline{\Omega})$ with $\int_\Omega u_0 = m$ such that the corresponding solutions blow up in finite time. }

\keywords{chemotaxis, indirect signal production, finite-time blowup}

\pacs[MSC Classification]{Primary 35B44; Secondary 35K51, 35Q92, 92C17}

\maketitle

\section{Introduction}\label{introduce section}

This paper is concerned with finite-time singularity formation of classical solutions to the following parabolic-parabolic-elliptic chemotaxis model accounting for indirect signal production 
\begin{align}
  \begin{cases}
    \label{sys: ks isp pp/ep/e}
      u_t = \Delta u - \nabla \cdot(u\nabla v),&  x\in\Omega, t>0,\\
      v_t =  \Delta v - v + w,&   x\in\Omega,	t>0,\\
      0  = \Delta w - w + u, &   x\in\Omega, t > 0,\\
      \partial_\nu u = \partial_\nu v = \partial_\nu w = 0 , &  x\in\partial\Omega, t >0,\\
      (u(\cdot, 0), v(\cdot, 0)) = (u_0, v_0), & x\in\Omega,
  \end{cases}
\end{align}
in a bounded domain $\Omega\subset\mathbb R^n$ with smooth boundary for some $n\in\mathbb N$, 
where $\partial_\nu$ denotes the derivative with respect to the outward normal of $\partial\Omega$,
and the initial functions $u_0\in C^0(\overline\Omega)$ and 
$v_0\in C^2(\overline\Omega)$ are nonnegative.

Among the chemotaxis models that describe the oriented movement of organisms in response to chemical stimuli, 
the minimal chemotaxis system%
\begin{equation}
  \label{sys: classical ks}
	\begin{cases}
		u_t = \nabla\cdot(\nabla u - u\nabla v), & x\in\Omega, t > 0, \\
		v_t = \Delta v - v + u,                  & x\in\Omega, t > 0,
	\end{cases}
\end{equation}
has been extensively studied.
This system was originally proposed by Keller and Segel~\cite{Keller1970} to investigate the aggregation of cells.
The density $u$ of cells is influenced by their motion, which consists of random walk and chemotactic movement along the gradient of chemical concentration $v$.
As is well-known, the system~\eqref{sys: classical ks} exhibits a competitive mechanism between diffusion and aggregation induced by chemotaxis where blowup---the extreme form of aggregation---can occur in finite time under certain conditions: 
\begin{itemize} 
\item In one dimension, diffusion dominates aggregation, and all solutions of \eqref{sys: classical ks} are globally bounded~\cite{Osaki2001}. 
\item In a planar domain, blowup may occur in finite or infinite time if the initial mass $m := \int_\Omega u_0$ exceeds the critical mass $4\pi$ but not equals to an integer multiple of $4\pi$ \cite{Horstmann2001}; all solutions remain globally bounded if $m < 4\pi$ (or $8\pi$ with symmetry assumptions) \cite{Nagai1997}. 
For a disk-shaped domain $\Omega$, there exist radially symmetric initial data with mass $m\in(8\pi,\infty)$ such that the solutions blow up in finite time \cite{Mizoguchi2014}.
\item In higher dimensions, diffusion is rather governed by chemotaxis, and for any initial mass $m > 0$, there exist initial data with mass $m$ that lead to finite-time blowup under radial settings~\cite{Winkler2013}.
\end{itemize}

Recently, chemotaxis models involving indirect signal production,
often expressed in the following typical form
\begin{align}
  \begin{cases}
    \label{sys: ks isp ppp}
      u_t =  \Delta u - \nabla \cdot(u\nabla v) + f(u,w),& x\in\Omega, t>0,\\
      v_t =  \Delta v - v + w,& x\in\Omega,	 t>0,\\
      w_t  = \tau\Delta w - w + u, & x\in\Omega, t > 0, 
  \end{cases}
  \end{align}
have received significant attention
for their ability to characterize more complex and realistic biological phenomena. 
For instance, in a study of predicting the cluster attack behaviors of mountain pine beetles \cite{Strohm2013}, 
the beetle population was divided into two distinct phenotypes: one represented by $u$ for beetles performing chemotaxis and the other by $w$ for beetles producing the attractant $v$,
with $f$ modeling potential growth, death and phenotype switching~\cite{Macfarlane2022}. 
Here,
$\tau\in\{0,1\}$ indicates the motility of the signal producers. 

When the signal producer is static, i.e., $\tau = 0$, 
the indirect attractive mechanism has an explosion-suppressing (or boundedness-supporting) effect. 
Hu and Tao \cite{Hu2016} showed that 
quadratic logistic dampening $f(u,w) = \mu u(1-u)$ to any weak extent $\mu > 0$ can prevent blowup in a three-dimensional indirect chemotaxis model with growth \eqref{sys: ks isp ppp}.  
In contrast, Winkler~\cite{Winkler2010} established boundedness in the three- and higher-dimensional parabolic-parabolic chemotaxis system with a strong logistic source in the sense of $\mu > \mu_\star$ for some $\mu_\star > 0$.
For the mass-preserving system \eqref{sys: ks isp ppp} with $f\equiv0$, a novel two-dimensional critical mass phenomenon has been identified, 
concerning infinite-time blowup as documented by Tao and Winkler~\cite{Tao2017} and Lauren\c{c}ot~\cite{Laurencot2019}.
For chemotaxis models \eqref{sys: ks isp ppp} with phenotype switching $f = -u+w$, the occurrence of two-dimensional infinite-time blowup depends on the total mass $\int_\Omega u_0 + \int_\Omega v_0$ \cite{Laurencot2021}.

For fully parabolic indirect-chemotaxis models ($\tau=1$) without a source term $f(u,w)\equiv0$, 
the indirect attractant mechanism can also reduce explosion-supporting potential.
Fujie and Senba~\cite{Fujie2017,Fujie2019} established global boundedness in physical spaces $n\leq3$ and identified a four-dimensional critical mass phenomenon for classical solutions of \eqref{sys: ks isp ppp}. 
Specifically, they found that  
\begin{itemize}
  \item  
  Under symmetry assumptions,
  the solutions exists globally and remain bounded 
  for nonnegative initial data $(u_0,v_0,w_0)$ with $\int_\Omega u_0 < 64\pi^2$. 
  Without radial symmetry, the same conclusion holds for \eqref{sys: ks isp ppp} under mixed boundary conditions, where  
  $\partial_\nu u - u\partial_\nu v = v = w = 0$ on $\partial\Omega$ for $t>0$.
  \item For any $m \in(64\pi^2,\infty)\setminus 64\pi^2\mathbb N$, there exist initial data satisfying 
  $\int_\Omega u_0 = m$ such that 
  the solutions of \eqref{sys: ks isp ppp} under mixed boundary conditions blow up in finite or infinite time.
\end{itemize}
Beyond four dimensions, Mao and Li~\cite{Mao2025} showed that for any $m>0$, there exist initial data with $\int_\Omega u_0 = m$ such that the solutions of the system \eqref{sys: ks isp ppp} blow up in finite or infinite time with symmetry assumptions.
We emphasize both critical dimension $4$ and critical mass $64\pi^2$ of \eqref{sys: ks isp ppp} regarding blowup are significantly larger compared to those of the classical chemotaxis model \eqref{sys: classical ks}. This increase is due to the strong parabolic smoothing effect of indirect attractant production.
Chen and Xiang~\cite{Chen2024} observed a similar positive effect on boundedness in certain indirect chemotaxis-haptotaxis model. Additionally, taxis-driven aggregation, particularly the spontaneous emergence of large densities during the intermediate transition triggered by large phenotype switching rates, has been explored in \cite{Painter2023,Laurencot2024}.

Concerning finite-time singularity formation, Tao and Winkler~\cite{Tao2025} considered a J\"ager-Luckhaus variant %
of the system \eqref{sys: ks isp ppp}, where the second equation was modified to $\Delta v - \int_\Omega w/|\Omega| + w = 0$, and the source term was taken as $f = -au + bw$ for some constants $a\geq0$ and $b\geq0$.
They showed that there exist radially symmetric initial data such that 
solutions of the variant, posed in a ball of five and higher dimensions, blow up in finite time. This was deduced by robust comparison methods, which are applicable to cooperative systems involving volume-filling %
and phenotype switching. %
In this regard, we focus on detecting finite-time blowup for \eqref{sys: ks isp pp/ep/e}, a Nagai-type variant %
of \eqref{sys: ks isp ppp}. 
Here, we adopt the assumption that the change of $w$ is stationary over time \cite{Jin2016}.
Notably, the system~\eqref{sys: ks isp ppp} is also explained as the balanced cases between attraction and repulsion in competing chemotaxis that models the aggregation of microglia observed in Alzheimer's disease~~\cite{Tao2013,Fujie2017,Fujie2019,Ding2019}.%

\subsection*{Main ideas and results}

The system~\eqref{sys: ks isp pp/ep/e} has an energy dissipation structure identified by Fujie and Senba~\cite{Fujie2017}.
The Lyapunov functional
\begin{equation*}
  \mathcal{F}(u, v) := \int_{\Omega}(u \log u- u v)
+\frac{1}{2} \int_{\Omega}|(-\Delta+1) v|^2, 
\end{equation*}
satisfies the differential identity  
\begin{equation*} 
\frac{\dd}{\dd t} \mathcal{F}(u(t), v(t))+\mathcal{D}(u(t), v(t))=0
\end{equation*}
along the reasonably regular solution curves of the system \eqref{sys: ks isp pp/ep/e}.
Here, the dissipation rate reads as 
\begin{equation*}
\mathcal{D}(u, v) :=
\int_{\Omega}\left(\left|\nabla (\Delta v - v + w)\right|^2+\left|\Delta v - v + w\right|^2\right)
+\int_{\Omega} u|\nabla(\log u - v)|^2,
\end{equation*}
where $w := (-\Delta_N + 1)^{-1}u$ is the unique classical solution of the Neumann elliptic problem
\begin{equation}
  \label{sys: w elliptic}
  \begin{cases}
    - \Delta w + w = u, & x\in\Omega,\\
    \partial_\nu w = 0, &x\in\partial\Omega.
  \end{cases}
\end{equation}
Invoking the above mathematical structure,
we are inspired by previous works (see Remark~\ref{r: precedents} below) to derive a lower bound for $\mathcal{F}$ in terms of a sublinear power of $\mathcal{D}$ in the form of 
\begin{equation}
  \label{eq: mathcalFCDtheta}
  -\mathcal{F} \leq C(\mathcal{D}^\theta +1) 
  \quad\text{with some }\theta\in(0,1)\text{ and } C > 0,
\end{equation}
for a large class of radially symmetric function pairs $(u,v)=(u(r),v(r))$, which, in particular, includes the radial trajectories of the system~\eqref{sys: ks isp pp/ep/e}.  
This result leads to a superlinear differential inequality for $-\mathcal{F}$,
which, in turn, implies occurrence of finite-time blowup for initial data with low energy. 

To be precise, we shall suppose the initial data $(u_0, v_0)$ consist of nonnegative functions satisfying 
\begin{equation}
  \label{h: initial data}
  0 \lneqq u_0\in C^0(\overline{\Omega}),
  \quad 0\leq v_0\in C^2(\overline{\Omega})
  \quad \text{with } \partial_\nu v_0 \equiv 0 \text{ on } \partial\Omega
\end{equation}
and additionally, we impose symmetry assumptions that 
\begin{equation}
  \label{h: radial initial data}
  (u_0,v_0) \text{ is a pair of positive and radially symmetric functions on } \overline{\Omega}, 
\end{equation}
with $\Omega = B_R := \{x\in\mathbb{R}^n\mid |x| < R\}$ for some $R>0$ and $n\in\mathbb{N}$.

Our main result states that low initial energy enforces finite-time blowup.

\begin{theorem}
  \label{thm: low energy enforced finite-time blowup}
  Assume $\Omega = B_R \subset\mathbb R^n$ for some $n\geq5$ and $R>0$.
  Let $m > 0$ and $A > 0$ be given.
  Then there exist positive constants $K(m,A)$ and $T(m,A)$ such that 
  given any $(u_0,v_0)$ from the set 
  \begin{equation}
    \label{sym: mathcal B}
    \begin{aligned}
       &\mathcal{B}(m,A) := \{(u_0,v_0)\in  C^0(\overline{\Omega}) \times C^2(\overline{\Omega})\mid 
        u_0 \text{ and } v_0 \text{ comply with } \eqref{h: initial data} 
       \text{ and } 
       \eqref{h: radial initial data} \\
        &\text{ such that } \|u_0\|_{L^1(\Omega)} = m, 
        \|v_0\|_{W^{2,2}(\Omega)} < A
        \text{ and } \mathcal{F}(u_0,v_0) < - K(m,A)\},
    \end{aligned}
  \end{equation}
  for the corresponding classical solution with maximal existence time $T_{\max}(u_0,v_0)$ given by Proposition~\ref{prop: local existence and uniqueness},
  one has $T_{\max}(u_0,v_0) \leq T(m,A)$.
\end{theorem}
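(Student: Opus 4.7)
The plan is to reduce the theorem, via the energy--dissipation identity, to the sublinear bound \eqref{eq: mathcalFCDtheta} on a suitable radial class, and then to read off finite-time blowup from a superlinear differential inequality. As a first step, set $\xi := -\Delta v + v - w$, so that $\tilde v := (-\Delta+1)v = w + \xi$. Using the elliptic identity $u = (-\Delta + 1)w$ and integrating by parts twice (the Neumann boundary conditions on $v$ and $w$ killing all boundary terms) gives
\begin{equation*}
\int_\Omega uv = \int_\Omega w\tilde v = \|w\|_{L^2(\Omega)}^2 + \int_\Omega w\xi,
\end{equation*}
and by expansion $\|\tilde v\|_{L^2(\Omega)}^2 = \|w\|_{L^2(\Omega)}^2 + 2\int_\Omega w\xi + \|\xi\|_{L^2(\Omega)}^2$. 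Substituting into the Lyapunov functional yields
\begin{equation*}
-\mathcal{F}(u,v) = -\int_\Omega u\log u + \tfrac{1}{2}\|w\|_{L^2(\Omega)}^2 - \tfrac{1}{2}\|\xi\|_{L^2(\Omega)}^2 \leq \tfrac{|\Omega|}{e} + \tfrac{1}{2}\|w\|_{L^2(\Omega)}^2,
\end{equation*}
since $s\log s \geq -e^{-1}$ for $s\geq 0$. Therefore \eqref{eq: mathcalFCDtheta} will follow once one establishes the key functional inequality $\|w\|_{L^2(\Omega)}^2 \leq C(\mathcal{D}(u,v)^\theta + 1)$ on the admissible radial class.

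Assuming the key inequality is in hand, set $y(t) := -\mathcal{F}(u(t),v(t))$. The identity $y'(t) = \mathcal{D}(u(t),v(t))$ together with the inequality above yields
\begin{equation*}
y'(t) \geq C^{-1/\theta}\bigl(y(t) - C'\bigr)_+^{1/\theta}, \qquad \tfrac{1}{\theta} > 1,
\end{equation*}
which is a superlinear ODI. A standard integration shows that any solution with $y(0) > K$ blows up before time $\tau(K)$ for an explicit decreasing function $\tau$ with $\tau(K)\to 0$ as $K\to\infty$. Choosing $K(m,A)$ large enough that $\tau(K(m,A)) \leq T(m,A)$, and using that $\mathcal{F}$ remains finite on $[0, T_{\max})$ for classical solutions by Proposition~\ref{prop: local existence and uniqueness}, forces $T_{\max}(u_0,v_0) \leq T(m,A)$ whenever $(u_0,v_0)\in\mathcal B(m,A)$.

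The principal obstacle is of course the inequality $\|w\|_{L^2}^2 \leq C(\mathcal{D}^\theta + 1)$. The strategy is to control $\|w\|_{L^2}$ by $\|u\|_{L^p(\Omega)}$ for some $p > 1$ via elliptic regularity applied to $-\Delta w + w = u$; in the radial setting with $n\geq 5$, sharpened radial Sobolev embeddings allow $p$ to be taken as close to $1$ as needed. Next, $\|u\|_{L^p}$ is extracted from the Fisher-information-type component $\int_\Omega u|\nabla(\log u - v)|^2$ of $\mathcal{D}$ by expanding
\begin{equation*}
\int_\Omega u|\nabla(\log u - v)|^2 = 4\int_\Omega|\nabla\sqrt{u}|^2 - 2\int_\Omega \nabla u\cdot \nabla v + \int_\Omega u|\nabla v|^2,
\end{equation*}
and converting $\int|\nabla\sqrt u|^2$ into a bound on $\|u\|_{L^{n/(n-2)}}$ through the Sobolev embedding $H^1\hookrightarrow L^{2n/(n-2)}$, which is then interpolated against the $L^1$-mass constraint to reach the desired $p$. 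The $\nabla v$ cross term is tamed by estimating $\nabla v$ in a suitable $L^q$ norm via Duhamel on $v_t - \Delta v + v = w$, using $\|v_0\|_{W^{2,2}} < A$ for the initial term and the $\|\xi\|_{H^1}^2$ component of $\mathcal{D}$ for the forcing term. The restriction $n\geq 5$ and radial symmetry enter precisely to ensure that the resulting interpolation exponent lies strictly below $1$.

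Finally, for the theorem to be substantive one must check that $\mathcal{B}(m,A)$ is nonempty for every prescribed $K$. The classical construction takes $v_0$ to be a radial bump with $\|v_0\|_{W^{2,2}} < A$ but large peak at the origin---permissible in $n\geq 5$ precisely because $W^{2,2}(\Omega) \not\hookrightarrow L^\infty(\Omega)$ in that range---and $u_0$ a positive, smooth radial approximation to $m\delta_0$. The coupling $-\int_\Omega u_0 v_0$ can then be driven to $-\infty$ while the entropy $\int_\Omega u_0\log u_0$ grows only logarithmically in the concentration scale and $\|(-\Delta + 1)v_0\|_{L^2}^2$ stays uniformly bounded, so $\mathcal{F}(u_0,v_0)$ can be pushed below any prescribed threshold $-K(m,A)$, thereby exhibiting initial data to which the blowup conclusion applies.
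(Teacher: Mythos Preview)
Your opening reduction is clean and correct: the identity $\int_\Omega uv = \|w\|_{L^2}^2 + \int_\Omega w\xi$ combined with the expansion of $\|(-\Delta+1)v\|_{L^2}^2$ indeed yields $-\mathcal F \leq \tfrac{|\Omega|}{e} + \tfrac12\|w\|_{L^2}^2$, and the ODI argument at the end is standard. The final paragraph on nonemptiness of $\mathcal B(m,A)$ belongs to Theorem~\ref{thm: dense initial data for blowup}, not to the present statement.

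The gap is in the central step: your proposed route to $\|w\|_{L^2}^2 \leq C(\mathcal D^\theta+1)$ does not close. Expanding the Fisher term as you do and integrating the cross term by parts gives
\[
\int_\Omega \nabla u\cdot\nabla v = -\int_\Omega u\,\Delta v = -\int_\Omega u(v-w-\xi) = \|\nabla w\|_{L^2}^2 + \int_\Omega \nabla w\cdot\nabla\xi,
\]
so that $4\|\nabla\sqrt u\|_{L^2}^2 = \mathcal D_{\mathrm{Fisher}} + 2\|\nabla w\|_{L^2}^2 + 2\int_\Omega\nabla w\cdot\nabla\xi - \int_\Omega u|\nabla v|^2$. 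The term $2\|\nabla w\|_{L^2}^2$ enters with the wrong sign and is at least as singular as the quantity $\|w\|_{L^2}^2$ you are trying to control; it cannot be absorbed by the remaining terms, and your appeal to ``Duhamel on $v_t-\Delta v+v=w$'' conflates the static functional inequality (which must hold for every $(u,v)$ in the admissible class at a fixed time) with the parabolic evolution. The claim that ``sharpened radial Sobolev embeddings allow $p$ to be taken as close to $1$ as needed'' is also not correct: the elliptic smoothing $(-\Delta+1)^{-1}\colon L^p\to L^2$ requires $p\geq 2n/(n+4)>1$ for $n\geq5$, radially or not, and concentration at the origin is exactly where radial improvements give nothing.

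The paper's proof is genuinely different here. After a localization $\int_{B_\rho}uv \lesssim \|\Delta v\|_{L^2(B_\rho)}^2 + \text{(lower order)}$ via a cutoff (Lemma~\ref{le: uvL1smallball}), the interior quantity $\|\Delta v\|_{L^2(B_\rho)}^2$ is controlled by testing the elliptic equation $\Delta w - w + u = 0$ against the Poho\v{z}aev-type multiplier $|x|^{n-2}x\cdot\nabla v$ and integrating twice in the radial variable (Lemma~\ref{le: deltavL2}). The restriction $n\geq5$ enters precisely there, through the sign of the coefficient $\tfrac32 - \tfrac{n-1}{n-2}$; the pointwise bounds \eqref{eq: pointwise constraint for v} from Proposition~\ref{prop: pointwise estimate} handle the boundary and outer-region terms. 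Optimizing over $\rho$ (Lemma~\ref{le: mathcalFDtheta}) then produces the sublinear exponent $\theta = n/(n+2)$. None of this structure is visible in the Fisher-information/Sobolev scheme you outline.
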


Concerning the existence of initial data with low energy,
for any given positive initial datum with suitable regularity,
we construct a sequence of initial data with arbitrarily large negative energy,
which can approximate the given initial datum in an appropriate topology. 

\begin{theorem}
  \label{thm: dense initial data for blowup}
  Assume $\Omega = B_R \subset\mathbb R^n$ for some $n\geq5$ and $R>0$.
  If $u_0\in C^0(\overline{\Omega})$ and $v_0\in C^2(\overline{\Omega})$ comply with \eqref{h: initial data} and \eqref{h: radial initial data},
  then
  one can find a sequence
  \begin{displaymath}
    \{(u_k, v_k) \in L^\infty(\Omega) \times W^{2,\infty}(\Omega)\}_{k\in\mathbb N}
  \end{displaymath}
  of function couples with properties:
  $(u_k, v_k)$ satisfying \eqref{h: initial data}, \eqref{h: radial initial data} and $\int_\Omega u_k = \int_\Omega u_0$ for all $k\in\mathbb N$ is such that 
  \begin{displaymath}
    \|u_k - u_0\|_{L^p(\Omega)}
    + \|v_k - v_0\|_{W^{2,2}(\Omega)} 
    \to 0
    \quad\text{as } k\to\infty, 
  \end{displaymath}
  holds for all $p\in\Big[1,\frac{2n}{n+4}\Big)$, 
  but $\mathcal F(u_k, v_k)\to-\infty$ as $k\to\infty$.
\end{theorem}

\begin{remark}
  \label{r: precedents}
  Our methods are built on the original ideas of Winkler \cite{Winkler2013},
  and are also inspired by previous works where functional inequalities of the type in \eqref{eq: mathcalFCDtheta} have been successfully applied to chemotaxis models in various contexts: volume-filling by Cie\'{s}lak and Stinner~\cite{Cieslak2012,Cieslak2014,Cieslak2015} and by Cao and Fuest~\cite{Cao2025}; the Cauchy problem by Winkler~\cite{Winkler2020a};
  competing chemotaxis by Lankeit~\cite{Lankeit2021}; two species models by Li and Li~\cite{Li2014}. 
  As far as we are aware, this work presents the first systematic implementation of the functional inequality \eqref{eq: mathcalFCDtheta} within the context of indirect chemotaxis.

  The primary challenge lies in deriving the functional inequality \eqref{eq: mathcalFCDtheta} due to explosion-suppressing effect of indirect attractant mechanisms. 
  Mathematically, we shall obtain interior estimates $\|\Delta v\|_{L^2(B_\rho)}$ in terms of the dissipation rate $\mathcal{D}$ for $\rho\in(0,R)$. To achieve this, we first establish weighted uniform estimates $\||x|^{\beta-1} v\|_{W^{1,\infty}(\Omega)}$ with some $\beta > n-2$ in Section~\ref{sec: pointwise}.
  In Section~\ref{sec: linking energy to dissipation}, 
  we are stimulated by \cite{Winkler2013,Cabre2020} to introduce the Poho\v{z}aev-type test function $\mathds{1}_{B_\rho}|x|^{n-2}x\cdot\nabla v$ to obtain the more complex second-order estimate. Here and below, $\mathds{1}_E$ stands for the indicator function of a Borel set $E\subseteq\mathbb R^n$.
\end{remark}

This paper is organized as follows. In Section~\ref{section preliminary},
we present some preliminaries involving the energy functional and basic estimates.
Section~\ref{sec: pointwise} is dedicated to deriving uniform-in-time pointwise estimates for the chemical concentration. 
In Section~\ref{sec: linking energy to dissipation}, we establish the functional inequality \eqref{eq: mathcalFCDtheta}.
Section~\ref{sec: finite-time blowup} is devoted to the proof of Theorem~\ref{thm: low energy enforced finite-time blowup}.
In Section~\ref{sec: initial data}, 
we construct a sequence of initial data with arbitrarily large negative energy 
thus completing the proof of Theorem~\ref{thm: dense initial data for blowup}.

\section{Preliminaries}
\label{section preliminary}

We begin by revisiting the results on the local existence and uniqueness of classical solutions to the system \eqref{sys: ks isp pp/ep/e}.

\begin{proposition}
  \label{prop: local existence and uniqueness}
  Let $n\in\mathbb N$ 
  and $\Omega \subset \mathbb{R}^n$ be a bounded domain with smooth boundary. 
 Assume that $(u_0, v_0)$ is as in \eqref{h: initial data}.
 Then there exist $T_{\max } \in(0, \infty]$ and a unique triplet $(u, v, w)$ of nonnegative functions from 
 $C^0\left(\bar{\Omega} \times\left[0, T_{\max }\right)\right) \cap C^{2,1}\left(\bar{\Omega} \times\left(0, T_{\max }\right)\right)$ 
 solving \eqref{sys: ks isp pp/ep/e} classically in $\Omega \times\left(0, T_{\max }\right)$. 
 Moreover, $u>0$, $v>0$ and $w>0$ in $\Omega \times\left(0, T_{\max }\right)$, and 
 \begin{equation} 
 \text{if } T_{\max } < \infty,
  \text{ then }
  \|u(\cdot, t)\|_{L^{\infty}(\Omega)}\to\infty 
  \text{ as } t \nearrow T_{\max }.
 \end{equation}
If in addition $\Omega = B_R$ for some $R>0$,
and $(u_0, v_0)$ is a pair of radially symmetric functions, 
then $u$, $v$ and $w$ are all radially symmetric.
\end{proposition}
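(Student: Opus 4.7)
The plan is to reduce the three-component system to a two-component parabolic one by solving the elliptic equation explicitly. Since the third equation together with homogeneous Neumann boundary conditions uniquely determines $w$ from $u$, we may write $w = (-\Delta_N + 1)^{-1}u \;=: \mathcal{W}[u]$, where $\mathcal{W}\colon L^p(\Omega) \to W^{2,p}(\Omega)$ is a bounded linear operator for each $p\in(1,\infty)$ by standard elliptic regularity. Substituting this into the second equation transforms \eqref{sys: ks isp pp/ep/e} into a system of the form $u_t = \Delta u - \nabla\cdot(u\nabla v)$ and $v_t = \Delta v - v + \mathcal{W}[u]$ under Neumann boundary conditions, which fits the framework of triangular parabolic systems treated by Amann. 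Alternatively, I would set up a Banach fixed-point scheme in a closed ball of $C^0(\overline{\Omega} \times [0,T]) \times C^0([0,T]; W^{1,q}(\Omega))$ for some $q>n$ and small $T>0$, using the smoothing properties of the Neumann heat semigroup $(e^{t\Delta})_{t\geq 0}$ together with the bound on $\mathcal{W}$ to verify contractivity.

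The next step is to upgrade regularity and verify that the fixed point constitutes a classical solution. Applying the semigroup representation $v(\cdot,t) = e^{t(\Delta-1)}v_0 + \int_0^t e^{(t-s)(\Delta-1)} \mathcal{W}[u(\cdot,s)] \dd s$ and the analogous Duhamel formula for $u$, together with standard $L^p$-$L^q$ smoothing estimates, yields the required Hölder regularity of the drift term $\nabla v$. Classical parabolic Schauder theory then pushes $(u,v)$ into $C^{2,1}(\overline{\Omega}\times(0,T_{\max}))$, and elliptic Schauder theory does the same for $w$. Nonnegativity of $u$, $v$ and $w$ follows from the weak maximum principle: for $w$ this uses $u\geq 0$ and the elliptic maximum principle on \eqref{sys: w elliptic}, for $v$ this uses $w\geq 0$ applied to the second equation, and for $u$ one rewrites the first equation as $u_t - \Delta u + \nabla v \cdot \nabla u + (\Delta v)u = 0$ and invokes the parabolic maximum principle with the now-bounded $C^0$ coefficients. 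Strict positivity $u>0$ on $\Omega\times(0,T_{\max})$ then results from the strong maximum principle applied to the same linear parabolic equation, since $u_0$ is not identically zero (being nonnegative with $\int_\Omega u_0 = m > 0$ in the intended application; more generally, by considering the support of $u_0$).

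For the extensibility criterion, I would invoke the standard continuation argument: the fixed-point construction extends $(u,v,w)$ as long as $\|u(\cdot,t)\|_{L^\infty(\Omega)}$ stays finite, because $\|\mathcal{W}[u]\|_{W^{2,q}(\Omega)} \leq C\|u\|_{L^q(\Omega)}$ for all $q$ and then the Duhamel formula for $v$ yields uniform bounds on $\|\nabla v(\cdot,t)\|_{L^\infty}$, which in turn feeds back to control $\|u\|_{L^\infty}$ via semigroup estimates on the $u$-equation on any finite subinterval. Thus the blowup of $\|u(\cdot,t)\|_{L^\infty(\Omega)}$ is the only possible obstruction to global existence. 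Finally, when $\Omega = B_R$ and $(u_0,v_0)$ is radially symmetric, one observes that if $\mathcal{O}\in O(n)$ and $(u,v,w)$ solves the system with data $(u_0,v_0)$, then $(u\circ\mathcal{O},v\circ\mathcal{O},w\circ\mathcal{O})$ solves the system with the same data; uniqueness forces these to coincide, yielding radial symmetry of $(u,v,w)$.

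I expect the main delicate point to be the careful verification of the extensibility criterion, where one must trade control of $\|u\|_{L^\infty}$ for control of $\|\nabla v\|_{L^\infty}$ through the elliptic smoothing of $\mathcal{W}$ composed with the parabolic smoothing of $e^{t(\Delta-1)}$; but since $w$ is one full derivative more regular than $u$ and the second equation then produces another order of regularity for $v$, this bootstrap is routine and avoids the pitfalls that would arise if $w$ were coupled parabolically. All other steps are standard and can be adapted, for instance, from arguments in the references on fully parabolic indirect-chemotaxis systems cited in the introduction.
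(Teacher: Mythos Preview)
Your proposal is correct and follows essentially the same route as the paper: the paper's proof merely cites standard fixed-point arguments from \cite{Horstmann2005,Tao2013,Fujie2017} for local existence, uniqueness and the extensibility criterion, invokes the maximum principle for nonnegativity, and appeals to uniqueness plus rotational invariance for radial symmetry---exactly the ingredients you spell out in more detail. Your explicit reduction via $w=\mathcal{W}[u]$ and the bootstrap through $\mathcal{W}$ composed with the heat semigroup are precisely what underlies those cited references in the indirect-chemotaxis setting.
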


\begin{proof}
  The local existence, uniqueness and extensibility criterion of classical solutions to the system~\eqref{sys: ks isp pp/ep/e} 
  can be established by adapting well-known fixed-point arguments in the contexts of chemotaxis models, as outlined in \cite[Theorem~3.1]{Horstmann2005}, \cite[Lemma~3.1]{Tao2013} and \cite[Section~4]{Fujie2017}. 
  The nonnegativity follows from the maximum principle, 
  while conservation of radial symmetry is a consequence of the uniqueness of solutions and the adequate form of the equations in \eqref{sys: ks isp ppp}.
\end{proof}

A Lyapunov functional associated with the system \eqref{sys: ks isp pp/ep/e} was formulated in~\cite[Proposition~6.1]{Fujie2017}.

\begin{proposition}
  Let $(u,v,w)$ be a classical solution given as in Proposition~\ref{prop: local existence and uniqueness}. 
  Then the following identity holds:
\begin{equation} 
  \label{eq: energyequation}
\frac{\dd}{\dd t} \mathcal{F}(u(t), v(t))+\mathcal{D}(u(t), v(t))=0 \quad \text { for all } t \in(0, T_{\max})
\end{equation}
where
\begin{equation}
  \label{sym: mathcalFD}
\begin{split} 
& \mathcal{F}(u, v)=\int_{\Omega}(u \log u- u v)
+\frac{1}{2} \int_{\Omega}|(-\Delta+1) v|^2\quad\text{and} \\
& \mathcal{D}(u, v)=
\int_{\Omega}\left(\left|\nabla v_t\right|^2+\left|v_t\right|^2\right)
+\int_{\Omega} u|\nabla(\log u - v)|^2 .
\end{split}
\end{equation}
\end{proposition}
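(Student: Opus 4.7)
The plan is to differentiate $\mathcal{F}(u(t),v(t))$ term by term along the solution curve, invoking each of the three PDEs in turn, with integration by parts and the homogeneous Neumann data making every boundary integral vanish. The strict positivity $u > 0$ on compact subsets of $\Omega \times (0, T_{\max})$ supplied by Proposition~\ref{prop: local existence and uniqueness} will legitimate the manipulation of $u\log u$.

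First I would handle the entropy contribution $\int_\Omega(u\log u - uv)$. Testing the first equation against $1 + \log u$ should yield $-\int_\Omega|\nabla u|^2/u + \int_\Omega \nabla u\cdot\nabla v$, while differentiating $\int_\Omega uv$ and testing the first equation against $v$ should give
\begin{equation*}
\frac{d}{dt}\int_\Omega uv = -\int_\Omega \nabla u\cdot\nabla v + \int_\Omega u|\nabla v|^2 + \int_\Omega u v_t.
\end{equation*}
The pointwise identity $u|\nabla(\log u - v)|^2 = \frac{|\nabla u|^2}{u} - 2\nabla u\cdot\nabla v + u|\nabla v|^2$ will then collapse the two calculations into
\begin{equation*}
\frac{d}{dt}\int_\Omega(u\log u - uv) = -\int_\Omega u|\nabla(\log u - v)|^2 - \int_\Omega u v_t,
\end{equation*}
producing the chemotactic half of $\mathcal{D}$ and leaving an excess $-\int_\Omega u v_t$ that the quadratic part of $\mathcal{F}$ must absorb.

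The decisive structural move is to rewrite the second equation as $(-\Delta + 1)v = w - v_t$. Differentiating $\tfrac12\int_\Omega|(-\Delta+1)v|^2$ in time, expanding $(w - v_t)(-\Delta v_t + v_t)$, and moving the Laplacian off $v_t$ should then yield
\begin{equation*}
\frac{d}{dt}\cdot\frac{1}{2}\int_\Omega|(-\Delta+1)v|^2 = \int_\Omega\bigl(\nabla w\cdot\nabla v_t + w v_t\bigr) - \int_\Omega\bigl(|\nabla v_t|^2 + v_t^2\bigr).
\end{equation*}
At this stage the elliptic third equation finally enters: since $\partial_\nu w = \partial_\nu v_t = 0$, the first integral on the right equals $\int_\Omega v_t(-\Delta w + w) = \int_\Omega u v_t$, which precisely cancels the leftover from the entropy computation. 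Adding the two derivatives will then produce \eqref{eq: energyequation}.

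The only analytical subtlety I anticipate — and the main (though minor) obstacle — is legitimating the time-differentiation and the successive integrations by parts near $t = 0$, where Proposition~\ref{prop: local existence and uniqueness} provides only $u_0 \in C^0(\overline\Omega)$ and $v_0\in C^2(\overline\Omega)$. I would resolve this in the standard way by first carrying out the computation on compact subintervals $[t_0, t] \subset (0, T_{\max})$, where $u>0$ together with $v \in C^{2,1}(\overline\Omega\times(0,T_{\max}))$ (in particular $\partial_\nu v_t = 0$) render every manipulation rigorous, integrating the identity in time, and then passing to the limit $t_0 \searrow 0$ by continuity of $\mathcal F(u(\cdot), v(\cdot))$.
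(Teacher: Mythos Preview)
Your computation is correct and is exactly the standard direct verification of the energy identity: differentiate $\int_\Omega(u\log u - uv)$ using the first equation to produce $-\int_\Omega u|\nabla(\log u - v)|^2 - \int_\Omega uv_t$, differentiate $\tfrac12\int_\Omega|(-\Delta+1)v|^2$ using $(-\Delta+1)v = w - v_t$ to produce $\int_\Omega uv_t - \int_\Omega(|\nabla v_t|^2 + v_t^2)$, and watch the cross terms cancel via the elliptic third equation. The paper itself does not give a proof but merely cites \cite[Proposition~6.1]{Fujie2017}, where the Lyapunov structure was first identified; your argument is essentially what one finds there. One small remark: the regularity class $C^{2,1}(\overline\Omega\times(0,T_{\max}))$ from Proposition~\ref{prop: local existence and uniqueness} does not by itself guarantee that $\nabla v_t$ exists or that $\partial_\nu v_t = 0$, so you should mention the routine parabolic bootstrap (smooth $u$ on compact time intervals $\Rightarrow$ smooth $w$ by elliptic regularity $\Rightarrow$ $v$ smooth by parabolic Schauder) that makes the integration by parts in the quadratic term legitimate.
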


For the sake of simplicity in notation, 
we shall collect some time-independent estimates specifically for high dimensions, i.e., $n\geq4$. These estimates will serve as a starting point for obtaining certain pointwise estimates in Section~\ref{sec: pointwise}. 

\begin{lemma}
  \label{le: basic estimates}
  Let $\Omega = B_R$ for some $R>0$ and $n\geq4$. 
  Then it holds that for any classical solution $(u,v,w)$ given as in Proposition~\ref{prop: local existence and uniqueness},
  \begin{equation}
    \label{eq: mass identity}
    \int_{\Omega} u(x, t) =\int_{\Omega} u_{0} = \int_\Omega w(x,t) =: m
      \quad\text{for all } t \in (0, T_{\max})
  \end{equation}
  and 
  \begin{equation}
    \label{eq: vmassinequality}
    \int_\Omega v(x,t) 
    \leq \max\left\{\int_\Omega v_0, \int_\Omega u_0\right\}
    \quad \text{for all } t\in(0,T_{\max}).
  \end{equation}
Furthermore, for each $p\in\Big[1,\frac{n}{n-2}\Big)$
and $q\in\Big[1,\frac{n}{n-1}\Big)$, 
there exist constants $C(p)>0$ and $C(q)>0$ such that 
for any choice of initial data $(u_0,v_0)$ with~\eqref{h: initial data} and \eqref{h: radial initial data},
the solution of \eqref{sys: ks isp pp/ep/e} satisfies 
  \begin{equation}
    \label{eq: wW1q}
    \|w\|_{W^{1,q}(\Omega)} \leq C(q)m
    \quad\text{for all } t \in (0, T_{\max}),
  \end{equation}
  and 
  \begin{equation}
    \label{eq: vW2p}
    \|v_t\|_{L^p(\Omega)}
    + \|w\|_{L^p(\Omega)}
    + \|v\|_{W^{2,p}(\Omega)} 
    \leq C(p)(m + \|v_0\|_{W^{2,2}(\Omega)})
    \quad\text{for all } t \in (0, T_{\max}).
  \end{equation}
\end{lemma}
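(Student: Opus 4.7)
The plan is to handle the zeroth-order identities by direct integration, defer the $w$-estimates to elliptic $L^1$-regularity, and reduce the $v$-estimates to a semigroup argument that exploits the relation $w = (-\Delta+I)^{-1}u$.

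For \eqref{eq: mass identity} I would integrate the $u$-equation over $\Omega$, using the Neumann conditions on $u$ and $v$ to eliminate both right-hand-side terms, and then integrate $0 = \Delta w - w + u$ together with $\partial_\nu w = 0$ to obtain $\int_\Omega w = \int_\Omega u = m$. For \eqref{eq: vmassinequality}, integrating the $v$-equation yields the scalar ODE $\frac{d}{dt}\int_\Omega v = -\int_\Omega v + m$, and the explicit formula $\int_\Omega v(t) = e^{-t}\int_\Omega v_0 + m(1-e^{-t})$ gives the claim. For \eqref{eq: wW1q} I would invoke classical $L^1$-elliptic regularity (Stampacchia duality or Boccardo--Gallou{\"e}t) for the Neumann problem $-\Delta w + w = u$, producing $\|w\|_{W^{1,q}(\Omega)} \leq C_q m$ for $q < n/(n-1)$; the $L^p$-bound on $w$ in \eqref{eq: vW2p} then follows from the Sobolev embedding $W^{1,q}\hookrightarrow L^{nq/(n-q)}$ upon sending $q \nearrow n/(n-1)$.

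The delicate part is the uniform-in-time $W^{2,p}$ bound on $v$, which I view as the main obstacle. I would use the variation-of-constants formula
\begin{equation*}
v(t) = e^{-tA}v_0 + \int_0^t e^{-(t-s)A}w(s)\dd s, \qquad A:=-\Delta+I,
\end{equation*}
associated with the analytic, exponentially stable Neumann semigroup $\{e^{-tA}\}$ on $L^p(\Omega)$. Since $n\ge 4$ forces $p < n/(n-2) \leq 2$, the embedding $W^{2,2}(\Omega)\hookrightarrow W^{2,p}(\Omega)$ and the commutation $Ae^{-tA}v_0 = e^{-tA}Av_0$ yield $\|Ae^{-tA}v_0\|_{L^p} \leq e^{-t}\|v_0\|_{W^{2,2}}$. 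For the convolution term, the apparent obstruction is that $Ae^{-rA}$ carries a non-integrable $1/r$ singularity on $L^p$ and no time regularity of $w$ is at hand. The key device is to rewrite $w(s) = A^{-1}u(s)$ and to absorb the singular $A$ into the resolvent, yielding
\begin{equation*}
A\int_0^t e^{-(t-s)A}w(s)\dd s = \int_0^t e^{-(t-s)A}u(s)\dd s.
\end{equation*}
Combining this with the $L^1\to L^p$ smoothing estimate $\|e^{-rA}\|_{L^1\to L^p} \leq Cr^{-(n/2)(1-1/p)}e^{-r}$ and the mass bound $\|u(s)\|_{L^1}=m$, the resulting time integral converges precisely when $p < n/(n-2)$. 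Elliptic $L^p$-regularity then upgrades $\|Av\|_{L^p}$ to $\|v\|_{W^{2,p}}$, and reading $v_t = \Delta v - v + w$ off the equation supplies the $L^p$-bound on $v_t$. The whole delicacy---and the very threshold $n/(n-2)$---is concentrated in the cancellation $A\cdot A^{-1}$; without it, only fractional-order Sobolev regularity of $v$ would be accessible.
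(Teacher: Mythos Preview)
Your argument is correct; the route for the $W^{2,p}$ bound on $v$ differs from the paper's. The paper differentiates the $v$-equation in $x_i$, observes that radial symmetry together with $\partial_\nu v=0$ forces $v_{x_i}=0$ on $\partial\Omega$, and then applies Dirichlet heat-semigroup gradient smoothing $L^q\to L^p$ to the equation $\partial_t v_{x_i}=\Delta v_{x_i}-v_{x_i}+w_{x_i}$ with forcing $\nabla w\in L^q$, $q<n/(n-1)$; the integrability condition $-\tfrac12-\tfrac n2(\tfrac1q-\tfrac1p)>-1$ reproduces the threshold $p<n/(n-2)$. Your device $Aw=u$ bypasses this differentiation entirely and reduces the matter to $L^1\to L^p$ smoothing of $e^{-tA}$ applied to $u$, which is shorter, does not appeal to the radial structure (the paper's boundary condition $v_{x_i}\equiv0$ genuinely uses it), and makes the cancellation that drives the exponent $n/(n-2)$ transparent. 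The paper's approach, on the other hand, only consumes $\nabla w\in L^q$ rather than the full resolvent identity $w=A^{-1}u$, and would therefore extend more readily to variants where $w$ is not exactly $(-\Delta+I)^{-1}u$.
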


\begin{proof}
The mass identity \eqref{eq: mass identity} follows directly 
from integrating the first and last equations in \eqref{sys: ks isp ppp} over $\Omega$.
Integrating the second equation yields  
\begin{equation*}
  \int_\Omega v(x,t) 
  = e^{-t}\int_\Omega v_0  
  + (1-e^{-t})\int_\Omega u_0 
  \leq \max\left\{\int_\Omega v_0, \int_\Omega u_0\right\}
  \quad \text{for all } t\in(0,T_{\max}),
\end{equation*}
which shows \eqref{eq: vmassinequality}.
By applying well-known $W^{1,p}$ regularity theory \cite{Brezis1973} to 
the elliptic boundary value problem $-\Delta w + w = u$, 
we obtain that for each 
$q\in\Big[1,\frac{n}{n-1}\Big)$,
there exists a constant $C(q)>0$ such that $\|w\|_{W^{1,q}(\Omega)} \leq C(q)\|u\|_{L^1(\Omega)}$
holds for all $t\in(0,T_{\max})$.
This implies \eqref{eq: wW1q} by virtue of \eqref{eq: mass identity}.
Thus, $w\in L^p(\Omega)$ for $p\in\Big[1,\frac{n}{n-2}\Big)$ by Sobolev embedding $W^{1,q}\hookrightarrow L^{\frac{nq}{n-q}}$ for $q\in\Big[1,\frac{n}{n-1}\Big)$. 
Moreover, for any $p\in\Big[1,\frac{n}{n-2}\Big)$ one can select 
$q\in\Big[1,\frac{n}{n-1}\Big)$ such that 
\begin{equation}
  \label{eq: pqchoices}
  -\frac{1}{2}-\frac{n}{2}\left(\frac{1}{q}-\frac{1}{p}\right) > -1.
\end{equation}
$L^p$--$L^q$ estimates of the heat semigroup $e^{t\Delta}$ 
with homogeneous Dirichlet boundary conditions (cf. \cite[Lemma~2.1]{Wu2022} and \cite[Lemma~2.4]{Mu2024}) 
applied to $v_{x_it} = \Delta v_{x_i} - v_{x_i} + w_{x_i}$ 
subject to $v_{x_i} = v_rx_i/r \equiv 0$ on $\partial\Omega$ for $i \in\{1,2,\ldots,n\}$,
provide a constant $C_d>0$ such that 
  \begin{align*}
    \|\nabla v_{x_i}\|_{L^p(\Omega)} 
    &\leq \|\nabla e^{t(\Delta-1)}v_{0x_i}\|_{L^{p}(\Omega)} 
    + \int_0^t\|\nabla e^{(t-\tau)(\Delta-1)}w_{x_i}\|_{L^p(\Omega)}\dd \tau\\
    &\leq |\Omega|^{1-p/2}\|\nabla e^{t(\Delta-1)}v_{0x_i}\|_{L^2(\Omega)} \\
    &\quad + C_d\sup_{\tau\in(0,t)}\|w_{x_i}(\cdot,\tau)\|_{L^q(\Omega)} \cdot \int_0^t\left(1+(t-\tau)^{-\frac{1}{2}-\frac{n}{2}\left(\frac{1}{q}-\frac{1}{p}\right)}\right)e^{-(t-\tau)}\dd \tau \\
    &\leq C_d\|v_0\|_{W^{2,2}(\Omega)} 
    + C_dC(q)m 
    \bigg(1+\int_0^\infty \tau^{-\frac{1}{2}-\frac{n}{2}\left(\frac{1}{q}-\frac{1}{p}\right)}e^{-\tau}\dd \tau\bigg),
  \end{align*}
holds for all $t\in(0,T_{\max})$, $p\in\Big[1,\frac{n}{n-2}\Big)$ and $i \in\{1,2,\ldots,n\}$,
where the last integral is finite due to the choices of parameters in~\eqref{eq: pqchoices}. 
According to \eqref{eq: vmassinequality},
this implies that $v\in W^{2,p}(\Omega)$ for all $t\in(0,T_{\max})$, 
and \eqref{eq: vW2p} holds,
since $v_t = \Delta v - v + w \in L^p(\Omega)$ for all $t\in(0,T_{\max})$.
\end{proof}

\section{Pointwise estimates for chemical concentration}\label{sec: pointwise}

This section is devoted to the deduction of uniform-in-time pointwise estimates for $v$ and $w$,
based on Lemma~\ref{le: basic estimates}.
Specifically, we show that symmetric functions from Sobolev spaces can be embedded into certain weighted Sobolev spaces, as established in~\cite{GuedesdeFigueiredo2011}. 
For convenience, we shall switch to the radial notation without any further comment, e.g., writing $v (r, t)$ instead of $v (x, t)$ when appropriate.
Throughout the sequel, we assume that the radius $R$ of the ball $\Omega$ is fixed. 
Consequently, it will not be explicitly stated that generic constants $C(p), C(m,M,B,\kappa), \ldots$
may vary from line to line,
and additionally depend on $R$.

The main results of this section are presented as follows.

\begin{proposition}
\label{prop: pointwise estimate}
  Let $\Omega = B_R\subset\mathbb R^n$ with some $R > 0$ and $n\geq4$. 
  There exists a constant $C_w > 0$ such that for any radially symmetric initial datum $(u_0,v_0)$ with \eqref{h: initial data} and \eqref{h: radial initial data}, 
  the corresponding classical solution $(u,v,w)$ of \eqref{sys: ks isp pp/ep/e} satisfies 
  \begin{equation}
    \label{eq: wpointwise}
    \sup_{t\in(0,T_{\max})} \||x|^{n-1}w\|_{W^{1,\infty}(\Omega)} \leq C_wm.
  \end{equation}
  For each $\beta > n - 2$, there exists $C(\beta) > 0$ such that 
  \begin{equation}
    \label{eq: vpointwise}
    \sup_{t\in(0,T_{\max})} \||x|^{\beta-1} v\|_{W^{1,\infty}(\Omega)}
    \leq C(\beta)(m + \|v_0\|_{W^{2,2}(\Omega)})
  \end{equation} 
  holds whenever radially symmetric initial data $(u_0,v_0)$ comply with \eqref{h: initial data} and \eqref{h: radial initial data}.
\end{proposition}

\begin{proof}
  The weighted uniform estimate \eqref{eq: wpointwise} follows directly from Lemma~\ref{le: wpointwise}.
  The estimates in Lemma~\ref{le: gradvpointwise} yield \eqref{eq: vpointwise}.
\end{proof}

The mass identity \eqref{eq: mass identity} permits us to gain pointwise information about $w$.   

\begin{lemma}
  \label{le: wpointwise}
  Let $\Omega = B_R\subset\mathbb R^n$ with some $R > 0$ and $n\geq4$. 
  Then there exists a constant $C_w > 0$ such that 
  \begin{equation}
    w + |x\cdot\nabla w| \leq \frac{C_wm}{|x|^{n-2}},\quad |x|\in(0,R],\quad t\in(0,T_{\max})
  \end{equation}
  for all $w$ given in Proposition~{\upshape\ref{prop: pointwise estimate}}.
\end{lemma}

\begin{proof}
  Integrating the last equation in \eqref{sys: ks isp pp/ep/e}, $ur^{n-1} = wr^{n-1} - (w_rr^{n-1})_r$ over the interval $(0,r)\subset(0,R)$ yields 
  \begin{displaymath}
    |w_r| = \left|r^{1-n}\left(\int_0^rw\eta^{n-1}\dd\eta 
    - \int_0^ru\eta^{n-1}\dd\eta\right) \right|
    \leq \frac{2\omega_n^{-1}m}{ r^{n-1}}
  \end{displaymath}
  for all $t\in(0,T_{\max})$,
  where the mass identity \eqref{eq: mass identity} has been used.
  Here and henceforth, $\omega_n$ denotes the surface area of the unit ball in $\mathbb R^n$.
  Choose $r_0(t)\in (R/2,R)$ such that  
  \begin{displaymath}
    w(r_0) \leq \frac{1}{|B_R\setminus B_{R/2}|}\int_{B_R\setminus B_{R/2}}w
    \quad\text{for all } t\in(0,T_{\max}).
  \end{displaymath}
  Using the Newton-Leibniz formula,
  we obtain  
  \begin{displaymath}
    \begin{split}
      w = w(r_0) + \int_{r_0}^rw_r(\eta)\dd\eta 
      &\leq \frac{m}{|B_R\setminus B_{R/2}|} 
      + \frac{2\omega_n^{-1}m}{n-2}\left(r^{2-n} + r_0^{2-n}\right)\\
      &\leq \frac{mR^{n-2}r^{2-n}}{|B_R\setminus B_{R/2}|} 
      + \frac{2\omega_n^{-1}m}{n-2}\left(r^{2-n} + 2^{n-2}r^{2-n} \right)
    \end{split}
  \end{displaymath}
  for all $t\in(0,T_{\max})$.
  Setting 
  \begin{equation*}
    C_w := \frac{R^{n-2}}{|B_R\setminus B_{R/2}|} + \frac{2^n}{\omega_n},
  \end{equation*}
  we complete the proof.
\end{proof}

Following the standard approach, e.g. in \cite{Ni1982,Winkler2013}, 
we can deduce pointwise estimates for the gradient of $v$ from \eqref{eq: vW2p}.
See \cite[Theorem~1.1]{GuedesdeFigueiredo2011} for generalized results concerning Sobolev imbedding in the spaces of radial functions.

\begin{lemma}
  \label{le: gradvpointwise}
For any $\beta > n - 2$, 
there exists $C(\beta) > 0$ such that 
  \begin{displaymath}
    \frac{v}{|x|^2} + 
  \frac{|\nabla v|}{|x|} 
  \leq \frac{C(\beta)}{|x|^{\beta}}\left(m + \|v_0\|_{W^{2,2}(\Omega)}\right), 
  \quad |x|\in(0,R], \quad t\in(0,T_{\max})
  \end{displaymath}
  holds for all $v$ given in Proposition~{\upshape\ref{prop: pointwise estimate}}. 
\end{lemma}

\begin{proof}
  Since $\beta > n-2$, we can put 
\begin{equation}
  \label{sym: p}
  p := \max\bigg\{\frac{n}{n-1},\frac{n}{\beta}\bigg\}\in\bigg(1, \frac{n}{n-2}\bigg).
\end{equation}
H\"older inequality entails 
\begin{align*}
  &\left|\int_{r_1}^r v_{rr}(s)+ \frac{n-1}{s}v_r(s)\dd s\right|\\
  &\leq \left(\int_{0}^R |v_{rr}(s)+ \frac{n-1}{s}v_r(s)|^p s^{n-1}\dd s\right)^{1/p}
      \left| \int_{r_1}^r s^{-(n-1)/(p-1)} \dd s\right|^{\frac{p-1}p}\\
  &\leq \omega_n^{-1/p}\|\Delta v\|_{L^p(\Omega)}
  \left(r^{-\frac{n-p}{p}} + r_1^{-\frac{n-p}{p}}\right)
\end{align*}
and similarly,
\begin{align*}
  \left|\int_{r_1}^r \frac{n-1}{s}v_r(s)\dd s\right|
  &\leq (n-1)\left(\int_{0}^R|v_r|^{\frac{np}{n-p}}s^{n-1}\dd s\right)^{\frac{n-p}{np}} 
  \left|\int_{r_1}^rs^{-\frac{n^2-n+p}{np-n+p}}\dd s\right|^{\frac{np-n+p}{np}}\\
  &\leq (n-1)\omega_n^{-\frac{n-p}{np}}\|\nabla v\|_{L^{\frac{np}{n-p}}(\Omega)}
  \left(r^{-\frac{n-p}{p}} + r_1^{-\frac{n-p}{p}}\right),
\end{align*}
for all $r\in(0,R)$ and $t\in(0,T_{\max})$, 
where 
$r_1(t)\in(R/2,R)$ is chosen such that 
\begin{displaymath}
  |v_r(r_1)| \leq \frac{1}{|B_R\setminus B_{R/2}|}\int_{B_R\setminus B_{R/2}}|\nabla v|,
  \quad t\in(0,T_{\max}).
\end{displaymath}
Newton-Leibniz formula gives 
\begin{align*}
  |v_r| &= \left|v_r(r_1) + \int_{r_1}^r v_{rr}(s)\dd s\right|\\
  &\leq  \left|v_r(r_0)\right| 
  + \left|\int_{r_1}^r v_{rr}(s)+ \frac{n-1}{s}v_r(s)\dd s\right| 
  + \left|\int_{r_1}^r \frac{n-1}{s}v_r(s)\dd s\right| \\ 
  &\leq 
  C_s\|v\|_{W^{2,p}(\Omega)}\left(r^{-\frac{n-p}{p}} + r_1^{-\frac{n-p}{p}} + \frac{1}{|B_R\setminus B_{R/2}|}\right),
\end{align*}
for all $r\in(0,R)$ and $t\in(0,T_{\max})$, 
with some $C_s>0$ inferred from Sobolev imbedding $W^{2,p}(\Omega)\hookrightarrow W^{1,\frac{np}{n-p}}(\Omega) \hookrightarrow W^{1,1}(\Omega)$.
According to \eqref{eq: vW2p} and \eqref{sym: p}, the desired pointwise estimates follow from an analogous argument used in Lemma~\ref{le: wpointwise}.
\end{proof}

\section{Energy bounded below by sublinear dissipation rate}
\label{sec: linking energy to dissipation}

The goal of this section is to establish a lower bound for the energy functional $\mathcal{F}$ in terms of a power of the dissipation rate $\mathcal D$, built upon the functional framework originally proposed by Winkler~\cite{Winkler2013},
as discussed in the introduction,
where $\mathcal{F}$ and $\mathcal{D}$ are defined in \eqref{sym: mathcalFD}.

To relate the energy $\mathcal{F}$ to the dissipation rate $\mathcal{D}$ under symmetry assumptions, 
we introduce the following quantities 
\begin{equation}
  \label{sym: f}
  f := - \Delta v + v - w 
\end{equation}
and 
\begin{equation}
  \label{sym: g}
  g := \left(\frac{\nabla u}{\sqrt u} - \sqrt u \nabla v\right)\cdot \frac{x}{|x|},
  \quad x\in\overline{\Omega}\setminus\{0\},
\end{equation} 
where 
\begin{equation}
  \label{sym: w}
  w := (-\Delta_N + 1)^{-1}u
\end{equation}
is the unique classical solution to the Neumann elliptic problem~\eqref{sys: w elliptic}.
Then the energy functional $\mathcal{F}$ and the dissipation rate $\mathcal{D}$ are given by 
\begin{equation} 
  \label{sym: mathacl F D}
\begin{split} 
& \mathcal{F}(u, v)=\int_{\Omega}(u \log u- u v)
+\frac{1}{2} \int_{\Omega}|(-\Delta+1) v|^2 \quad \text{and} \\
& \mathcal{D}(u, v)=
\int_{\Omega}\left(\left|\nabla f\right|^2+\left|f\right|^2\right)
+\int_{\Omega} g^2 .
\end{split}
\end{equation}

According to Lemma~\ref{le: basic estimates} and Proposition~\ref{prop: pointwise estimate},
we will focus on positive and radially symmetric functions $(u,v)$ that satisfy the mass constraints 
\begin{equation}
  \label{eq: constraint of mass}
  \int_\Omega u = m, 
  \quad \int_\Omega v \leq M 
  \quad\text{and } L^1\text{-bound}\quad 
  \int_\Omega |f| \leq B,
\end{equation}
as well as the additional pointwise restrictions 
\begin{equation}
  \label{eq: pointwise constraint for v}
  \||x|^{n-1}w\|_{W^{1,\infty}(\Omega)} \leq M 
  \quad\text{and}\quad 
  \||x|^{\kappa-1} v\|_{W^{1,\infty}(\Omega)}
  \leq B,
\end{equation}
where $m>0$, $M>0$, $B>0$ and $\kappa > n - 2$ are fixed parameters.
More precisely, the objective is to derive an inequality of the form
\begin{displaymath} 
\frac{\mathcal{F}(u, v)}{\mathcal{D}^\theta(u, v)+1} \geqslant-C(m, M, B, \kappa) \quad \text { for all }(u, v) \in \mathcal{S}(m, M, B, \kappa)
\end{displaymath}
where $\theta \in(0,1)$ and $C(m, M, B, \kappa)>0$. 
Here and below, $\mathcal{S}(m, M, B, \kappa)$ is the set of function pairs defined as 
\begin{equation*}
\begin{aligned}
\mathcal{S}(m, M, B, \kappa)
&:= \{(u, v) \in C^1(\bar{\Omega}) \times C^3(\bar{\Omega}) \mid 
 u \text { and } v \text { are positive and radially } \\
&\:\text { symmetric functions satisfying \eqref{eq: constraint of mass}, \eqref{eq: pointwise constraint for v} and } \partial_\nu v \equiv 0 \text{ on } \partial\Omega\}.
\end{aligned}
\end{equation*}

We begin with the following straightforward observation: 
controlling the energy functional $\mathcal F$ from below 
is equivalent to bounding the mixed term $\int_\Omega uv$ from above.

\begin{lemma}
  \label{le: firstobservation}
  Let $\Omega = B_R\subset\mathbb R^n$ be a ball of radius $R>0$ with some $n\geq4$.
  Then 
  \begin{equation}
    -\mathcal{F}(u,v) \leq \int_\Omega uv 
    + \frac{\omega_nR^n}{e}
  \end{equation}
  holds for all $(u,v)\in \mathcal S(m,M,B,\kappa)$, 
  where $\omega_n$ is the surface area of the unit ball in $\mathbb{R}^n$.
\end{lemma}

\begin{proof}
  It follows from the basic inequality $s\ln s \geq - e^{-1}$ valid for all $s>0$.
\end{proof}

We incorporate decoupling techniques from Cao and Fuest~\cite{Cao2025},
which assist us to convert the mixed term $\int_\Omega uv$ into an interior $L^2$--estimate for the Laplacian of the chemical concentration $v$. We will handle quadratic zeroth-order terms later.

\begin{lemma}
  \label{le: uvL1smallball}
  Let $\Omega = B_R\subset\mathbb R^n$ be a ball of radius $R>0$ with some $n\geq4$.
  There exists a constant $C_2 > 0$ such that 
  \begin{equation*}
    \int_{\Omega} uv \leq 3 \|\Delta v\|_{L^2(B_\rho)}^2   
      + 3\|v\|_{L^2(B_\rho)}^2
      + \|f\|_{L^2(B_\rho)}^2
      + C_2(m+M)B\rho^{2-\kappa},
  \end{equation*}
  holds for all $(u,v)\in\mathcal S(m,M,B,\kappa)$ and all $\rho\in(0,R)$.
\end{lemma}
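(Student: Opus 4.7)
The plan is to split $\int_\Omega uv = \int_{B_\rho} uv + \int_{\Omega\setminus B_\rho} uv$ and bound each piece separately, leveraging the weighted $W^{1,\infty}$ hypotheses in \eqref{eq: pointwise constraint for v}. The crucial preliminary step is to sharpen the naive pointwise bound $v(r)\leq Br^{1-\kappa}$ that comes from the $L^\infty$ part of the weighted norm alone. Writing $g(r):=r^{\kappa-1}v(r)$, the hypothesis $\||x|^{\kappa-1}v\|_{W^{1,\infty}(\Omega)}\leq B$ makes $g$ Lipschitz with constant at most $B$; since $\kappa-1>0$ and $v$ is continuous at the origin, $g(0)=0$, so the Newton--Leibniz formula upgrades the pointwise bound to $v(r)\leq Br^{2-\kappa}$. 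An identical argument gives $w(r)\leq Mr^{2-n}$, and expanding $\partial_r(r^{\kappa-1}v)=(\kappa-1)r^{\kappa-2}v+r^{\kappa-1}v_r$ with the sharpened $v$-bound further yields $|v_r(r)|\leq \kappa B r^{1-\kappa}$ and similarly $|w_r(r)|\leq nMr^{1-n}$.

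With these bounds in hand the outer integral is immediate: using $\kappa>n-2\geq 2$ so that $r\mapsto r^{2-\kappa}$ is decreasing,
\[
  \int_{\Omega\setminus B_\rho}uv \leq \|v\|_{L^\infty(\Omega\setminus B_\rho)}\int_\Omega u \leq mB\rho^{2-\kappa}.
\]
For the inner integral I would substitute $u=-\Delta w+w$ and apply Green's second identity on $B_\rho$,
\[
  \int_{B_\rho}uv = \int_{B_\rho}w(-\Delta v+v) + \int_{\partial B_\rho}(w v_r - v w_r)\,dS,
\]
and then rewrite the volume integral via $-\Delta v+v=f+w$. Estimating it by Cauchy--Schwarz gives $\int_{B_\rho}w(-\Delta v+v)\leq \|w\|_{L^2(B_\rho)}(\|\Delta v\|_{L^2(B_\rho)}+\|v\|_{L^2(B_\rho)})$. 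The identity $w=-\Delta v+v-f$ combined with the triangle inequality yields $\|w\|_{L^2(B_\rho)}\leq \|\Delta v\|_{L^2(B_\rho)}+\|v\|_{L^2(B_\rho)}+\|f\|_{L^2(B_\rho)}$, after which a short Young's-inequality computation delivers exactly the required $3\|\Delta v\|_{L^2(B_\rho)}^2+3\|v\|_{L^2(B_\rho)}^2+\|f\|_{L^2(B_\rho)}^2$.

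For the boundary term I would feed the sharpened pointwise bounds from the first paragraph into
\[
  \left|\int_{\partial B_\rho}(w v_r - v w_r)\,dS\right|
  \leq \omega_n\rho^{n-1}\bigl(M\rho^{2-n}\cdot\kappa B\rho^{1-\kappa} + B\rho^{2-\kappa}\cdot nM\rho^{1-n}\bigr)
  \leq C(\kappa)MB\rho^{2-\kappa},
\]
and sum with the inner and outer estimates to obtain the claimed inequality with constant $C(\kappa)(m+M)$. The hard part, and really the only non-routine step, is securing the $\rho^{2-\kappa}$ scaling on the right-hand side: the naive $L^\infty$-weighted bound only produces $\rho^{1-\kappa}$, which is strictly more singular and would prevent the absorption needed in the subsequent functional inequality \eqref{eq: mathcalFCDtheta}. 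The Lipschitz upgrade, which exploits the vanishing of $|x|^{\kappa-1}v$ at the origin when $\kappa>1$, is precisely what repairs both the outer integral and the boundary contribution simultaneously.
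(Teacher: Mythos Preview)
Your proof is correct. The paper's argument is close in spirit but technically different: rather than applying Green's second identity on $B_\rho$ and handling the resulting surface integral on $\partial B_\rho$, the paper introduces a smooth radial cutoff $\phi^{(\rho)}\in C_0^\infty(B_\rho)$ equal to $1$ on $B_{\rho/2}$, estimates $\int_{B_{\rho/2}}uv\leq\int_\Omega uv\,\phi^{(\rho)}$, and integrates by parts against the compactly supported test function; this trades your boundary term for annular cross terms $\int_\Omega w\bigl(2\nabla v\cdot\nabla\phi^{(\rho)}+v\Delta\phi^{(\rho)}\bigr)$ supported in $B_\rho\setminus B_{\rho/2}$, which are then controlled via the same weighted pointwise bounds you feed into the surface integral. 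Your direct Green's-identity route is slightly more economical in this radial setting, while the cutoff version would transfer more readily to non-radial geometries. One point worth highlighting: your explicit ``Lipschitz upgrade'' from $v(r)\leq Br^{1-\kappa}$ to $v(r)\leq Br^{2-\kappa}$ (using that $r^{\kappa-1}v$ vanishes at the origin) makes precise a step the paper uses tacitly---both its outer estimate \eqref{eq: uvannulus} and its annular bound on $-J_2$ invoke $v\leq Br^{2-\kappa}$ and $w\leq M r^{2-n}$, which do not follow from the $L^\infty$ part of the weighted $W^{1,\infty}$ hypothesis without exactly your argument.
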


\begin{proof}
In view of \eqref{sym: w}, we test $u = -\Delta w + w$ by $v\mathds{1}_{B_\rho}$, 
and compute upon integration by parts,  
  \begin{equation}
    \label{eq: uvpsi}
    \begin{split}
      \int_{B_\rho} uv 
      &= -\int_{B_\rho} \Delta w v 
      + \int_{B_\rho} wv
      = - \int_{\partial B_\rho} v\partial_\nu w\dd S 
      + \int_{B_\rho} \nabla w\cdot \nabla v 
      + \int_{B_\rho} wv\\ 
      &= \int_{B_\rho} (-\Delta v + v)w 
      + \int_{\partial B_\rho} (w\partial_\nu v - v\partial_\nu w)\dd S =: J_1 + J_2.
    \end{split}
  \end{equation}
  Using $w = -\Delta v + v - f$ specified in \eqref{sym: f},
  we estimate by Young inequality
  \begin{equation}
    \begin{split}
      J_1 &= \int_{B_\rho} (-\Delta v + v)^2 
      + \int_{B_\rho} (\Delta v - v)f
      \leq \frac32\|\Delta v - v\|_{L^2(B_\rho)}^2 
       + \frac12\|f\|_{L^2(B_\rho)}^2\\ 
      &\leq 3\|\Delta v\|_{L^2(B_\rho)}^2 
      + 3\|v\|_{L^2(B_\rho)}^2 
      + \|f\|_{L^2(B_\rho)}^2.
    \end{split}
  \end{equation}
  Recalling pointwise information from \eqref{eq: pointwise constraint for v}, 
  we estimate 
  \begin{equation}
      J_2 \leq MB \int_{\partial B_\rho}
      (|x|^{2-n}|x|^{1-\kappa} + |x|^{2-\kappa}|x|^{1-n})\dd S
      = 2\omega_nMB\rho^{2-\kappa},
  \end{equation}
 and also estimate the integral in the annular region
  \begin{equation}
    \label{eq: uvannulus}
    \int_{\Omega \backslash B_{\rho}} uv 
    \leq \sup _{r \in\left(\rho, R\right)} v(r) \cdot \int_{\Omega \setminus B_{\rho}} u 
    \leq B \sup_{r \in\left(\rho, R\right)} r^{2-\kappa} \cdot \int_{\Omega} u 
    = Bm \rho^{2-\kappa}.
    \end{equation}
  Summing \eqref{eq: uvpsi}--\eqref{eq: uvannulus} yields the desired estimate 
  \begin{displaymath}
    \int_{\Omega} uv \leq 3 \|\Delta v\|_{L^2(B_\rho)}^2   
    + 3\|v\|_{L^2(B_\rho)}^2
    + \|f\|_{L^2(B_\rho)}^2
    + (2\omega_nMB + Bm)\rho^{2-\kappa},
  \end{displaymath}
  for all $(u,v)\in \mathcal{S}(m,M,B,\kappa)$ and $\rho\in(0,R)$,
  which completes the proof.
\end{proof}

To leverage the structural information \eqref{sym: g}, 
we employ the Poho\v{z}aev-type test function $|x|^{n-2}x\cdot\nabla v$ over the inner region $B_\rho$,
initially introduced in the seminal work of Winkler~\cite{Winkler2010b, Winkler2010,Winkler2013}.
This strategy enables us to establish an upper bound for the interior estimate $\|\Delta v\|_{L^2(B_\rho)}$ through a combination of dissipation rate quantities and lower-order signal components. 
Specifically, this procedure generates a small factor $\rho^2$ in the critical term $\rho^2\|\nabla f\|_{L^2(B_\rho)}^2$.
Through careful calibration (see Lemma~\ref{le: mathcalFDtheta}), we shall demonstrate that 
$\rho$ can be adaptively selected depending on $\|\nabla f\|_{L^2(\Omega)}$ 
to effectively convert this quadratic term into a sublinear power of the dissipation rate quantity $\|\nabla f\|_{L^2(\Omega)}^2$.  
Such test function methodologies have become standard tools in modern elliptic analysis, as evidenced by recent developments in \cite{Cabre2020}.

\begin{lemma}
  \label{le: deltavL2}
  Let $\Omega = B_R\subset\mathbb R^n$ be a ball of radius $R>0$ with some  $n\geq5$. 
  Then there exists a constant $C_0 > 0$ such that
  \begin{equation} 
    \label{eq: 1/8deltavL2}
  \begin{split}
    &\frac{1}{8} \|\Delta v\|_{L^2(B_\rho)}^2  
    + \frac{3}{4}  \|\nabla v\|_{L^2(B_\rho)}^2 
    \leq 12\rho^2\|\nabla f\|_{L^2(B_\rho)}^2 
    + C_0\|f\|_{L^2(B_\rho)}^2\\
    &+ \sqrt m\rho\|g\|_{L^2(B_\rho)} 
     + \|v\|_{L^2(B_\rho)}^2
    + C_0B^2\rho^{n+2-2\kappa} + C_0BM\rho^{2-\kappa} + 2m 
  \end{split} 
  \end{equation}
  holds for all $(u,v)\in \mathcal{S}(m,M,B,\kappa)$ and $\rho\in(0,R)$.
\end{lemma}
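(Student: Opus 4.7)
The approach combines an $L^2$ energy identity for $-\Delta v$ on $B_\rho$ with a Pohozaev-type test of the relation $f = -\Delta v + v - w$ against the multiplier $x\cdot\nabla v$, linked to the dissipation via the radial structural identity $\nabla u - u\nabla v = g\sqrt u\,x/|x|$. I would multiply $f = -\Delta v + v - w$ by $x\cdot\nabla v$ and integrate over $B_\rho$, using the classical Pohozaev identities for $\int \Delta v(x\cdot\nabla v)$ and $\int v(x\cdot\nabla v)$ to isolate $\tfrac{n-2}{2}\|\nabla v\|_{L^2(B_\rho)}^2$, which for $n\geq 5$ exceeds $\tfrac{3}{4}\|\nabla v\|_{L^2(B_\rho)}^2$. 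A further integration by parts rewrites $\int_{B_\rho}f(x\cdot\nabla v) = -n\int_{B_\rho}fv - \int_{B_\rho}v(x\cdot\nabla f) + \rho\int_{\partial B_\rho}fv$, whereupon Young's inequality applied to $\rho\|v\|_{L^2(B_\rho)}\|\nabla f\|_{L^2(B_\rho)}$ with a suitable parameter yields the $12\rho^2\|\nabla f\|_{L^2(B_\rho)}^2$ contribution while leaving only a harmless $\|v\|^2$ residue.

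The principal difficulty is handling $\int_{B_\rho}w(x\cdot\nabla v)$: the pointwise bound $w\leq M|x|^{-(n-1)}$ makes $\|w\|_{L^2(B_\rho)}$ infinite in general for $n\geq 2$, so a naive Cauchy-Schwarz fails. The remedy is to rewrite the $w$-dependence via the equation $-\Delta w + w = u$: integration by parts reduces the bulk $w$-terms to $\int_{B_\rho}(u-w)v$ plus boundary contributions, and the residual $\int_{B_\rho}uv$ is controlled through the radial identity $\nabla u - u\nabla v = g\sqrt u\,x/|x|$. Indeed, testing this identity against the vector field $x$ on $B_\rho$ and integrating by parts gives
\[
\int_{B_\rho}u\,(x\cdot\nabla v) = -n\int_{B_\rho}u + \rho\int_{\partial B_\rho}u - \int_{B_\rho}|x|\,g\sqrt u,
\]
so that Cauchy-Schwarz yields $\bigl|\int_{B_\rho}|x|g\sqrt u\bigr|\leq \||x|\sqrt u\|_{L^2(B_\rho)}\|g\|_{L^2(B_\rho)}\leq \rho\sqrt{m}\,\|g\|_{L^2(B_\rho)}$, which produces the $\sqrt m\rho\|g\|_{L^2(B_\rho)}$ term. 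The mass identity $\int_{B_\rho}u\leq m$ supplies the $2m$ contribution.

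For the $\|\Delta v\|_{L^2(B_\rho)}^2$ part, I would multiply $-\Delta v = f - v + w$ by $-\Delta v$ on $B_\rho$ and integrate, applying Young's inequality with parameter tuned so that the resulting $\|\Delta v\|^2$ contribution on the right absorbs into the left-hand side, leaving the prefactor $\tfrac{1}{8}$. The boundary integrals on $\partial B_\rho$ arising from the various integrations by parts---such as $\omega_n\rho^n w(\rho)v(\rho)$, $\rho\int_{\partial B_\rho}v^2$, and $\tfrac{\rho}{2}\int_{\partial B_\rho}|\nabla v|^2$---are handled via Proposition~\ref{prop: pointwise estimate}: paired with the surface weight $\omega_n\rho^{n-1}$, the bound $v(\rho)\leq B\rho^{1-\kappa}$ produces the $C_0B^2\rho^{n+2-2\kappa}$ contribution, while $w(\rho)\leq M\rho^{-(n-1)}$ together with $v(\rho)\leq B\rho^{1-\kappa}$ yields $C_0BM\rho^{2-\kappa}$; terms involving $v_r(\rho)$ either carry favorable signs or are absorbed on the right.

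The main obstacle is the delicate $w$-handling: because $w\leq M|x|^{-(n-1)}$ fails to be $L^2$ near the origin, every $w$-dependent bulk integral must be recast through $-\Delta w + w = u$, and the resulting $\int uv$ terms tracked carefully through the $g$-identity. This substitution-and-cancellation scheme, combined with the numerous boundary contributions controlled via the pointwise bounds of Proposition~\ref{prop: pointwise estimate}, is precisely the complex second-order estimate alluded to in Remark~\ref{r: precedents}; the dimensional constraint $n\geq 5$ enters naturally through the Pohozaev coefficient $(n-2)/2\geq\tfrac{3}{2}$, which comfortably accommodates the $\tfrac{3}{4}\|\nabla v\|^2$ prefactor on the left.
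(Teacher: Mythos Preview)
Your plan has a genuine gap, and it lies precisely in the two places where $w$ enters.

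\textbf{The separate $\|\Delta v\|^2$ estimate fails.} Multiplying $-\Delta v = f - v + w$ by $-\Delta v$ on $B_\rho$ forces you to control $\int_{B_\rho} w\,\Delta v$. One integration by parts gives $\int_{B_\rho}\nabla w\cdot\nabla v$ plus boundary; with only $|w_r|\lesssim r^{1-n}$ and $|v_r|\leq Br^{1-\kappa}$ available, the integrand behaves like $r^{1-\kappa}$ near the origin and the integral \emph{diverges} for $\kappa>n-2\geq 3$. Two integrations by parts give $\int_{B_\rho}uv - \int_{B_\rho}wv$ plus boundary; but $\int_{B_\rho}wv$ diverges for the same reason, and $\int_{B_\rho}uv$ is exactly the quantity that Lemma~\ref{le: uvL1smallball} bounds by $3\|\Delta v\|_{L^2(B_\rho)}^2+\cdots$, so the argument becomes circular with a bad constant. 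Substituting $w=-\Delta v+v-f$ instead yields a tautology. In short, for $n\geq 5$ the pointwise information on $w$ is too weak to close any direct $L^2$ identity for $\Delta v$.

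\textbf{The unweighted Poho\v{z}aev test does not help.} Testing $f=-\Delta v+v-w$ against $x\cdot\nabla v$ gives only $\tfrac{n-2}{2}\|\nabla v\|^2$, never $\|\Delta v\|^2$. Your claim that the $w$-term ``reduces to $\int_{B_\rho}(u-w)v$'' is not what the equation $-\Delta w+w=u$ delivers for $\int w(x\cdot\nabla v)$; writing $w=u+\Delta w$ instead produces $\int_{B_\rho}\Delta w\,(x\cdot\nabla v)$, whose expansion again contains $\int_{B_\rho}\nabla w\cdot\nabla v$, which diverges. And even where your $g$-identity is correct, it yields $\int u(x\cdot\nabla v)$, not $\int uv$---these are different quantities.

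\textbf{What the paper does instead.} The paper tests the \emph{$w$-equation} $\Delta w - w + u = 0$ against the \emph{weighted} multiplier $r^{n-1}v_r$ on $(0,\eta)$, then substitutes $w=-y+v-f$ and $w_r=-y_r+v_r-f_r$ with $y:=\Delta v$. The cross-term $\int_0^\eta y y_r r^{2n-2}\,dr=\tfrac12 y^2(\eta)\eta^{2n-2}$ is the source of $\|\Delta v\|^2$: after dividing by $\eta^{n-1}$ and performing a \emph{second} integration over $\eta\in(0,\rho)$, one obtains $\tfrac12\int_0^\rho y^2\eta^{n-1}\,d\eta$. The same double integration converts the dangerous boundary contribution $u(\eta)\eta^{2n-2}$ (arising from $\int u_r r^{2n-2}dr$) into the harmless bulk term $-\int_0^\rho u\eta^{n-1}d\eta\leq 0$, and Fubini on the remaining double integrals produces the coefficient $\tfrac32-\tfrac{n-1}{n-2}$, which is where $n\geq 5$ really enters. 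The weighting by $r^{n-2}$ and the two-step integration are not cosmetic---they are exactly what circumvents the lack of $L^2$ control on $w$ near the origin.
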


\begin{proof}
  Denote $\alpha = 2(n-1)$.
  Multiplying $\Delta w - w + u = 0$ by $|x|^{n-2}x\cdot\nabla v$ 
  and integrating over $B_\eta\subset\Omega$ for $\eta\in(0, R)$,
  we get 
  \begin{align}
    \label{eq: test by rn-1vr}
    - \int_0^\eta uv_rr^\alpha\dd r + \int_0^\eta wv_rr^\alpha\dd r
    &=  \int_0^\eta (w_rr^{n-1})_rv_rr^{n-1}\dd r.
  \end{align}
  Integration by parts yields 
  \begin{equation} 
  \begin{split} 
    \int_0^\eta (w_rr^{n-1})_rv_rr^{n-1}\dd r
    &= w_r(\eta)v_r(\eta)\eta^\alpha 
    - \int_0^\eta w_rr^{n-1}(v_rr^{n-1})_r\dd r \\
    &= w_r(\eta)v_r(\eta)\eta^\alpha 
    - \int_0^\eta w_ryr^\alpha\dd r, \quad \eta\in(0, R).
  \end{split} 
\end{equation}
  Here, we abbreviate   
  \begin{displaymath}
    y := \Delta v = v_{rr} + \frac{n-1}{r}v_r = (v_rr^{n-1})_rr^{1-n},\quad r\in(0, R)
  \end{displaymath}
  for brevity.
  Using $w = - y + v -f$ and $w_r = - y_r + v_r - f_r$,
  we evaluate for $\eta\in(0, R)$
  \begin{equation} 
    \label{eq: wvrr2n-2}
  \begin{split}
    &\int_0^\eta wv_rr^\alpha\dd r 
    = \int_0^\eta (-y + v -f)v_rr^\alpha\dd r \\
    &= - \int_0^\eta (v_rr^{n-1})_rv_rr^{n-1}\dd r 
    + \frac{1}{2}\int_0^\eta(v^2)_rr^\alpha\dd r
    - \int_0^\eta fv_rr^\alpha\dd r \\
    &= - \frac12v_r^2 (\eta)\eta^\alpha 
    + \frac12 v^2(\eta)\eta^\alpha 
    - (n-1)\int_0^\eta v^2r^{\alpha-1}\dd r 
    - \int_0^\eta fv_rr^\alpha\dd r 
  \end{split} 
  \end{equation}
  and similarly 
  \begin{equation}
    \label{eq: wryr2n-2}
  \begin{split}
    & - \int_0^\eta w_ryr^\alpha\dd r 
    = - \int_0^\eta (- y_r + v_r - f_r)yr^\alpha\dd r \\
    &=  \int_0^\eta  yy_rr^\alpha\dd r 
    - \int_0^\eta (v_rr^{n-1})_r v_rr^{n-1}\dd r
    + \int_0^\eta  f_ryr^\alpha\dd r \\
    &= \frac{1}{2}y^2(\eta)\eta^\alpha 
    - (n-1)\int_0^\eta y^2r^{\alpha-1}\dd r 
    - \frac12v_r^2 (\eta)\eta^\alpha + \int_0^\eta f_ryr^\alpha\dd r.
  \end{split} 
  \end{equation}
    Collecting \eqref{eq: test by rn-1vr}--\eqref{eq: wryr2n-2} and  
  multiplying by $\eta^{1-n}$, 
  we get 
  \begin{equation}
    \label{eq: 1/2y2etan-1}
    \begin{split}
      &\frac{1}{2}y^2(\eta)\eta^{n-1}
      + w_r(\eta)v_r(\eta)\eta^{n-1}
      - \frac{n-1}{\eta^{n-1}}\int_0^\eta y^2r^{\alpha-1}\dd r \\
      =& \frac12 v^2(\eta)\eta^{n-1} 
      - \frac{1}{\eta^{n-1}}\int_0^\eta uv_rr^\alpha\dd r
      - \frac{1}{\eta^{n-1}}\int_0^\eta fv_rr^\alpha\dd r\\
      &\quad - \frac{1}{\eta^{n-1}}\int_0^\eta f_ryr^\alpha\dd r
      - \frac{n-1}{\eta^{n-1}}\int_0^\eta v^2r^{\alpha-1}\dd r \\
      \leq &\frac12 v^2(\eta)\eta^{n-1} 
      - \frac{1}{\eta^{n-1}}\int_0^\eta uv_rr^\alpha\dd r
      + \omega_n^{-1}\|f\|_{L^2(B_\eta)}\|\nabla v\|_{L^2(B_\eta)}\\
      &\quad + \omega_n^{-1}\|\nabla f\|_{L^2(B_\eta)}\|\Delta v\|_{L^2(B_\eta)} 
    \end{split}
  \end{equation}
  for all $\eta\in(0,R)$,
  where the last inequality follows from dropping the last term and H\"older inequality.
  Integrating \eqref{eq: 1/2y2etan-1} over $(0,\rho)\subset(0,R)$, we have 
  \begin{equation}
    \label{eq: 1/2inty2etan-1}
    \begin{split}
      &\frac12\int_0^\rho y^2\eta^{n-1}\dd\eta 
      + \int_0^\rho w_rv_r\eta^{n-1}\dd\eta 
      - \int_0^\rho\frac{n-1}{\eta^{n-1}}\int_0^\eta y^2r^{\alpha-1}\dd r\dd\eta\\
      \leq &\frac12\int_0^\rho v^2\eta^{n-1}\dd\eta 
      - \int_0^\rho\frac{1}{\eta^{n-1}}\int_0^\eta uv_rr^\alpha\dd r\dd\eta\\
      &\quad + \omega_n^{-1}\rho\|\nabla f\|_{L^2(B_\rho)}\|\Delta v\|_{L^2(B_\rho)} 
      + \omega_n^{-1}\rho \|f\|_{L^2(B_\rho)}\|\nabla v\|_{L^2(B_\rho)}.
    \end{split}
  \end{equation}
  We calculate the second term on the left of \eqref{eq: 1/2inty2etan-1} by integration by parts, 
  the identity $w = -y +v -f$ from \eqref{sym: f} and H\"older inequality 
  \begin{equation} 
    \label{eq: wrvretan-1}
  \begin{split} 
    &\int_0^\rho w_rv_r\eta^{n-1}\dd\eta 
    = wv_r\rho^{n-1} - \int_0^\rho wy\eta^{n-1}\dd\eta \\
    &= wv_r\rho^{n-1}
    + \int_0^\rho y^2\eta^{n-1}\dd\eta 
    - \int_0^\rho vy\eta^{n-1}\dd\eta 
    + \int_0^\rho fy \eta^{n-1}\dd\eta\\
    &= wv_r\rho^{n-1} 
    - vv_r\rho^{n-1}
    + \int_0^\rho y^2\eta^{n-1}\dd\eta 
    + \int_0^\rho v_r^2\eta^{n-1}\dd\eta 
    + \int_0^\rho fy \eta^{n-1}\dd\eta\\
    &\geq \int_0^\rho y^2\eta^{n-1}\dd\eta 
    + \omega_n^{-1} \|\nabla v\|_{L^2(B_\rho)}^2
    - \omega_n^{-1} \|\Delta v\|_{L^2(B_\rho)}\|f\|_{L^2(B_\rho)} 
    + (w-v)v_r\rho^{n-1}
  \end{split} 
\end{equation}
for all $\rho\in(0,R)$.
  By Fubini theorem, we compute the last term on the left of \eqref{eq: 1/2inty2etan-1}
  \begin{equation}
    \begin{split}
      &- \int_0^\rho\frac{n-1}{\eta^{n-1}}\int_0^\eta y^2r^{\alpha-1}\dd r\dd\eta 
      = - (n-1)\int_0^\rho\int_r^\rho\frac{\dd\eta}{\eta^{n-1}} y^2r^{\alpha-1}\dd r\\
      = &- \frac{n-1}{n-2}\int_0^\rho y^2r^{n-1}\left(1 - \frac{r^{n-2}}{\rho^{n-2}}\right)\dd r
      \geq - \frac{n-1}{n-2}\int_0^\rho y^2r^{n-1}\dd r.
    \end{split}
  \end{equation}
  Thanks to $uv_r = u_r - g\sqrt{u}$ given by \eqref{sym: g}, 
  the second term on the right of \eqref{eq: 1/2inty2etan-1} can be controlled accordingly 
  \begin{equation}
    \label{eq: I_3}
  \begin{split} 
    &- \int_0^\rho\frac{1}{\eta^{n-1}}\int_0^\eta uv_rr^\alpha\dd r\dd\eta
    = - \int_0^\rho\frac{1}{\eta^{n-1}}\int_0^\eta (u_r-g\sqrt{u})r^\alpha\dd r \dd\eta\\
    \leq &- \int_0^\rho u\eta^{n-1}\dd\eta 
    + \alpha\int_0^\rho\frac{1}{\eta^{n-1}}\int_0^\eta ur^{\alpha-1}\dd r\dd\eta 
    + \int_0^\rho \int_0^\eta |g\sqrt u|r^{n-1}\dd r\dd\eta\\ 
    \leq &- \int_0^\rho u\eta^{n-1}\dd\eta 
    + \frac{\alpha}{n-2}\int_0^\rho u\eta^{n-1}\left(1-\frac{\eta^{n-2}}{\rho^{n-2}}\right)\dd\eta 
    + \frac{\sqrt m\rho}{\omega_n}\|g\|_{L^2(B_\rho)}\\
    \leq &\  \frac{nm}{\omega_n(n-2)} + \frac{\sqrt{m}\rho}{\omega_n}\|g\|_{L^2(B_\rho)}.
  \end{split} 
  \end{equation} 
  Substituting \eqref{eq: wrvretan-1}--\eqref{eq: I_3} into \eqref{eq: 1/2inty2etan-1} we get  
  \begin{equation} 
    \label{eq: 3/2-n-1/n-2}
  \begin{split}
    &\left(\frac{3}{2} - \frac{n-1}{n-2}\right) \|\Delta v\|_{L^2(B_\rho)}^2 
    + \|\nabla v\|_{L^2(B_\rho)}^2 \\
    &\leq \rho\left(\|\nabla f\|_{L^2(B_\rho)}\|\Delta v\|_{L^2(B_\rho)}
    + \|\nabla v\|_{L^2(B_\rho)}\|f\|_{L^2(B_\rho)}\right)
    + \|\Delta v\|_{L^2(B_\rho)}\|f\|_{L^2(B_\rho)} \\
    & \quad 2m + \sqrt{m}\rho\|g\|_{L^2(B_\rho)} 
    + \|v\|_{L^2(B_\rho)}^2
    + \omega_n (v-w)v_r\rho^{n-1}.
  \end{split} 
  \end{equation}
  The pointwise restrictions \eqref{eq: pointwise constraint for v} imply 
  \begin{displaymath}
    \omega_n (v-w)v_r\rho^{n-1}
    \leq \omega_n(B\rho^{2-\kappa} + M\rho^{2-n})B\rho^{1-\kappa}\rho^{n-1} 
    = \omega_nB^2\rho^{n+2-2\kappa} + \omega_nBM\rho^{2-\kappa}.
  \end{displaymath}
  Young inequality gives 
  \begin{displaymath}
    \rho\|\nabla f\|_{L^2(B_\rho)}\|\Delta v\|_{L^2(B_\rho)}
    \leq \frac{1}{48} \|\Delta v\|_{L^2(B_\rho)}^2 
    + 12\rho^2\|\nabla f\|_{L^2(B_\rho)}^2,
  \end{displaymath}
  and 
  \begin{displaymath}
    \rho\|\nabla v\|_{L^2(B_\rho)}\|f\|_{L^2(B_\rho)} 
    \leq \frac{1}{4}\|\nabla v\|_{L^2(B_\rho)}^2 
    + \rho^2\|f\|_{L^2(B_\rho)}^2
  \end{displaymath}
  as well as 
  \begin{displaymath}
    \|\Delta v\|_{L^2(B_\rho)}\|f\|_{L^2(B_\rho)} 
    \leq \frac{1}{48}\|\Delta v\|_{L^2(B_\rho)} + 12\|f\|_{L^2(B_\rho)}^2.
  \end{displaymath}
  Noting the fact 
  \begin{displaymath}
    \frac{3}{2} - \frac{1}{24} - \frac{n-1}{n-2} \geq \frac18,
  \end{displaymath}
  thanks to $n\geq5$,
  we plug five estimates above into \eqref{eq: 3/2-n-1/n-2},
  which becomes \eqref{eq: 1/8deltavL2} by putting 
  $C_0 := \max\{\omega_n, 12 + R^2\}$.
\end{proof}

We shall eliminate the quadratic zeroth-order terms $\|f\|_{L^2(\Omega)}^2$ and $\|v\|_{L^2(\Omega)}^2$ left in Lemma~\ref{le: uvL1smallball} and  Lemma~\ref{le: deltavL2} without treatment. 

\begin{lemma}
  \label{le: 1/24mathcalF}
  Let $\Omega = B_R\subset\mathbb R^n$ be a ball of radius $R>0$ with some  $n\geq5$. 
  Then there exists a constant $C_4 > 0$ such that 
  \begin{equation*}
    \begin{aligned}
      -\frac{1}{24}\mathcal{F} 
      &\leq 12\rho^2\|\nabla f\|_{L^2(\Omega)}^2 
      + C_4B^{4/(n+2)}\|\nabla f\|_{L^2(\Omega)}^{2n/(n+2)}
      + \sqrt m\rho\|g\|_{L^2(\Omega)} \\
      &\quad + C_4(R^{2\kappa - n} + 1)(B^2 + m^2 + M^2 + 1)\rho^{n-2\kappa}
    \end{aligned}
   \end{equation*}
   holds for all $(u,v)\in \mathcal{S}(m,M,B,\kappa)$ and $\rho\in(0,R)$, 
   where $\mathcal{F}$ is given in \eqref{sym: mathacl F D}.
\end{lemma}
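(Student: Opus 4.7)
The plan is to chain Lemmas~\ref{le: firstobservation}, \ref{le: uvL1smallball}, and \ref{le: deltavL2} so as to replace $-\mathcal{F}$ by dissipation-type quantities, and then to eliminate the two residual zeroth-order norms $\|f\|_{L^2}^2$ and $\|v\|_{L^2(B_\rho)}^2$ that are left behind. Starting from $-\mathcal{F} \leq \int_\Omega uv + \omega_nR^n/e$ via Lemma~\ref{le: firstobservation} and bounding the mixed term by Lemma~\ref{le: uvL1smallball}, I would divide the resulting inequality by $24 = 3\cdot 8$; this specific factor is chosen so that the coefficient on $\|\Delta v\|_{L^2(B_\rho)}^2$ lands exactly at $1/8$, matching the LHS of Lemma~\ref{le: deltavL2}. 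Substituting Lemma~\ref{le: deltavL2} in then produces the terms $12\rho^2\|\nabla f\|_{L^2(\Omega)}^2$ and $\sqrt m\rho\|g\|_{L^2(\Omega)}$ with exactly the prescribed coefficients, while leaving behind an extra multiple $(1+\tfrac18)\|v\|_{L^2(B_\rho)}^2$, an extra multiple $(C_0+\tfrac{1}{24})\|f\|_{L^2(B_\rho)}^2$, and $\rho$-polynomial error contributions of orders $\rho^{2-\kappa}$, $\rho^{n+2-2\kappa}$, together with $O(1)$ constants.

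The leftover $\|f\|_{L^2(\Omega)}^2$ I would dispose of by Gagliardo--Nirenberg interpolation. Since $\|f\|_{L^1(\Omega)} \leq B$ is built into the definition of $\mathcal{S}$ while $\|\nabla f\|_{L^2(\Omega)}^2$ sits inside $\mathcal{D}$, the standard bound
\[
\|f\|_{L^2(\Omega)}^2 \leq C\|\nabla f\|_{L^2(\Omega)}^{2n/(n+2)}\|f\|_{L^1(\Omega)}^{4/(n+2)} + C\|f\|_{L^1(\Omega)}^2
\]
delivers exactly the $CB^{4/(n+2)}\|\nabla f\|_{L^2(\Omega)}^{2n/(n+2)}$ term in the target, together with a harmless additive contribution $CB^2$. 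For the remaining $\|v\|_{L^2(B_\rho)}^2$ I would sharpen the naive reading $v(x) \leq B|x|^{1-\kappa}$ of the $L^\infty$ part of the constraint, which is \emph{not} square-integrable near the origin for typical $\kappa$. Setting $h(x) := |x|^{\kappa-1}v(x)$, the continuity of $v \in C^3(\overline{\Omega})$ combined with $\kappa > 1$ forces $h(0) = 0$, so the Lipschitz control $\|h\|_{W^{1,\infty}(\Omega)} \leq B$ improves via the fundamental theorem of calculus to $|h(x)| \leq B|x|$, and hence $v(x) \leq B|x|^{2-\kappa}$; spherical integration then yields $\|v\|_{L^2(B_\rho)}^2 \leq C(\kappa)B^2\rho^{n+4-2\kappa}$.

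In the final step I would bundle all residual $\rho$-power error contributions $\rho^{2-\kappa}$, $\rho^{n+2-2\kappa}$, $\rho^{n+4-2\kappa}$ and the additive constants $2m$, $CB^2$, $\omega_nR^n/e$ into one $C(\kappa)(B^2+m^2+M^2+1)\rho^{n-2\kappa}$. Since $\kappa > n-2$ makes $n-2\kappa$ the most negative of these exponents, using $\rho \leq R$ gives $\rho^\alpha \leq R^{\alpha-(n-2\kappa)}\rho^{n-2\kappa}$ for every larger exponent $\alpha$ that appears, and the purely constant terms are absorbed via the positive lower bound $\rho^{n-2\kappa} \geq R^{n-2\kappa}$; using AM--GM along the way converts mixed products such as $(m+M)B$ and $BM$ into $C(B^2+m^2+M^2)$. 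I expect the main technical point to be the refinement of the pointwise estimate in the penultimate step --- the naive $L^\infty$-reading of the weighted constraint is simply not enough to keep $\|v\|_{L^2(B_\rho)}^2$ integrable, and it is crucial to exploit that $|x|^{\kappa-1}v$ vanishes at the origin in order to recover a full extra factor of $|x|$ before squaring and integrating.
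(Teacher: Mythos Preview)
Your chaining of Lemmas~\ref{le: firstobservation}--\ref{le: deltavL2} and your treatment of $\|f\|_{L^2}^2$ via Gagliardo--Nirenberg are exactly right, but your handling of $\|v\|_{L^2(B_\rho)}^2$ has a genuine gap. The refinement $v(x)\le B|x|^{2-\kappa}$ is correct, yet $|x|^{2-\kappa}$ belongs to $L^2(B_\rho)$ only when $n+3-2\kappa>-1$, i.e.\ $\kappa<(n+4)/2$. Since the lemma is asserted for every $\kappa>n-2$, and since $n-2\ge (n+4)/2$ as soon as $n\ge8$, your pointwise argument covers no admissible $\kappa$ at all in dimensions $n\ge8$; even for $n=5,6,7$ it handles only the sub-range $\kappa\in(n-2,(n+4)/2)$.

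The paper avoids this by \emph{not} discarding the negative contribution $-\tfrac34\|\nabla v\|_{L^2(B_\rho)}^2$ coming from Lemma~\ref{le: deltavL2}. Instead it applies the same Gagliardo--Nirenberg interpolation to $v$ (against the mass bound $\|v\|_{L^1(\Omega)}\le M$) and Young's inequality to obtain
\[
\tfrac98\|v\|_{L^2(B_\rho)}^2 \le \tfrac{9}{16}\|\nabla v\|_{L^2(\Omega)}^2 + C\,M^2,
\]
splits $\|\nabla v\|_{L^2(\Omega)}^2=\|\nabla v\|_{L^2(B_\rho)}^2+\|\nabla v\|_{L^2(\Omega\setminus B_\rho)}^2$, absorbs the inner piece into the retained $-\tfrac34\|\nabla v\|_{L^2(B_\rho)}^2$, and controls the outer piece via the pointwise gradient constraint $|v_r|\le Br^{1-\kappa}$, which is square-integrable on the annulus $\Omega\setminus B_\rho$ for every $\kappa>n-2$. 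This is the step you are missing; once you keep the negative gradient term the rest of your outline goes through verbatim.
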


\begin{proof}
  Lemma~\ref{le: firstobservation} and \ref{le: uvL1smallball} entail
  there exists $C_1 > 0$ such that   
  \begin{align*}
    -\mathcal{F} 
    &\leq 3 \|\Delta v\|_{L^2(B_\rho)}^2  
    + 3\|v\|_{L^2(B_\rho)}^2
    + \|f\|_{L^2(B_\rho)}^2
    + C_1(Bm + BM)\rho^{2-\kappa}
    + C_1
  \end{align*}
  holds for all $\rho\in(0,R)$.
  Furthermore, Lemma~\ref{le: deltavL2} warrants the existence of $C_3>0$ satisfying 
  \begin{equation} 
    \label{eq: mathcalFleq}
  \begin{split}
    &-\frac{1}{24}\mathcal{F} 
    \leq 12\rho^2\|\nabla f\|_{L^2(B_\rho)}^2 
    + C_3\|f\|_{L^2(B_\rho)}^2
    + \sqrt m\rho\|g\|_{L^2(B_\rho)} \\
    & + \frac{9}{8}\|v\|_{L^2(B_\rho)}^2
    - \frac34\|\nabla v\|_{L^2(B_\rho)}^2 
    + C_3B^2\rho^{n-2\kappa}
    + C_3(Bm + BM)\rho^{2-\kappa} + 2m + C_1
  \end{split}
  \end{equation} 
  for all $\rho\in(0,R)$.
  Gagliardo-Nirenberg inequality \cite{Nirenberg1959} provides a constant $C_{gn} > 0$ such that 
  \begin{displaymath}
    \|\psi\|_{L^2(\Omega)}^2
    \leq C_{gn} \|\nabla \psi\|_{L^2(\Omega)}^{2n/(n+2)}\|\psi\|_{L^1(\Omega)}^{4/(n+2)} 
    + C_{gn}\|\psi\|_{L^1(\Omega)}^2
  \end{displaymath}
  holds for all $\psi\in W^{1,2}(\Omega)$.  
  Recalling \eqref{eq: constraint of mass} we thus have  
  \begin{equation}
    \label{eq: f gn}
    \|f\|_{L^2(\Omega)}^2 
    \leq C_{gn}B^{4/(n+2)} \|\nabla f\|_{L^2(\Omega)}^{2n/(n+2)}
    + C_{gn}B^2,
  \end{equation}
  as well as 
  \begin{equation}
    \label{eq: v gn}
    \begin{split} 
    \|v\|_{L^2(B_\rho)}^2 
    &\leq C_{gn}M^{4/(n+2)} \|\nabla v\|_{L^2(\Omega)}^{2n/(n+2)} 
    + C_{gn} M^2\\
    &\leq \frac{1}{2}\|\nabla v\|_{L^2(\Omega)}^2
    + 2^nC_{gn}^{(n+2)/2}M^2
    + C_{gn} M^2
    \end{split}
  \end{equation}
  for all $\rho\in(0,R)$
  by Young inequality.
  Invoking \eqref{eq: pointwise constraint for v}, we estimate 
  \begin{equation}
    \label{eq: nablavouterregion}
    \begin{split} 
      \|\nabla v\|^2_{L^2(\Omega)}
      &= \|\nabla v\|_{L^2(B_\rho)}^2 
      + \|\nabla v\|^2_{L^2(\Omega\setminus B_\rho)}\\
      &\leq \|\nabla v\|_{L^2(B_\rho)}^2  
      +B^2\||x|^{1-\kappa}\|_{L^2(\Omega\setminus B_\rho)}^2\\
      &\leq \|\nabla v\|_{L^2(B_\rho)}^2 
      + \omega_nB^2R^2\rho^{n-2\kappa}
    \end{split}
  \end{equation}
  for all $\rho\in(0,R)$.
 Inserting \eqref{eq: f gn}--\eqref{eq: nablavouterregion} into \eqref{eq: mathcalFleq} yields 
 \begin{equation*}
  \begin{split}
    -\frac{1}{24}\mathcal{F} 
    &\leq 12\rho^2\|\nabla f\|_{L^2(\Omega)}^2 
    + C_3C_{gn}B^{4/(n+2)}\|\nabla f\|_{L^2(\Omega)}^{2n/(n+2)}
    + \sqrt m\rho\|g\|_{L^2(\Omega)} + C(\rho) 
  \end{split}
 \end{equation*}
 for all $\rho\in(0,R)$, 
 where 
 \begin{align*}
 C(\rho) &:= C_3C_{gn}B^2
 + 2^{n+1}C_{gn}^{(n+2)/2}M^2
 + 2C_{gn} M^2
 + \omega_nB^2R^2\rho^{n-2\kappa}\\
 & \quad + C_3B^2\rho^{n-2\kappa}
 + C_3(Bm + BM)\rho^{2-\kappa} + 2m + C_1 \\
 &\:\leq (C_3C_{gn} + 2^{n+1}C_{gn}^{(n+2)/2} + 2C_{gn} + 1)(B^2 + M^2 + m^2 + 1)\\
 &\quad + (\omega_nR^2+C_3 + C_3R^{\kappa-n+2} + C_1R^{2\kappa-n})(B^2+M^2+m^2+1)\rho^{n-2\kappa}.
 \end{align*}
 Taking
 \begin{displaymath}
\begin{split}
  C_4 &: = C_3C_{gn} 
    + 2^{n+1}C_{gn}^{(n+2)/2}
    + 2C_{gn}
    + 1 
    + \omega_n + 2C_3 + C_1,
\end{split}
 \end{displaymath}
 we finish the proof.
\end{proof}

Here we shall choose $\rho$ appropriately, following the strategy of Winkler~\cite{Winkler2013}, 
to establish a lower bound of the energy $\mathcal{F}$ by a sublinear power of the dissipation rate $\mathcal{D}$.

\begin{lemma}
  \label{le: mathcalFDtheta}
  Let $\Omega = B_R\subset\mathbb R^n$ be a ball of radius $R>0$ with some  $n\geq5$.  
  Then there exist $\theta\in(0,1)$ and $C(m, M, B, \kappa) > 0$ such that 
  \begin{equation}
    \label{eq: mathcalFDtheta}
    -\mathcal{F} \leq C(m, M, B, \kappa) (\mathcal{D}^\theta + 1)
  \end{equation}
  holds for all $(u,v)\in \mathcal{S}(m, M, B, \kappa)$, 
  where $\mathcal{F}$ and $\mathcal{D}$ are specified as in \eqref{sym: mathacl F D}.
\end{lemma}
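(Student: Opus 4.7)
The plan is to feed Lemma~\ref{le: 1/24mathcalF} with the elementary bounds $\|\nabla f\|_{L^2(\Omega)}^2 \leq \mathcal{D}(u,v)$ and $\|g\|_{L^2(\Omega)}^2 \leq \mathcal{D}(u,v)$, which are immediate from the definition of $\mathcal{D}$ in \eqref{sym: mathacl F D}, and then optimize the resulting estimate over the still-free parameter $\rho\in(0,R)$. After substitution, Lemma~\ref{le: 1/24mathcalF} will take the shape
\begin{equation*}
  -\mathcal{F} \leq C_1\rho^2 \mathcal{D} + C_2\mathcal{D}^{\frac{n}{n+2}} + C_3\rho\,\mathcal{D}^{\frac{1}{2}} + C_4\rho^{\,n-2\kappa},
\end{equation*}
valid for every $\rho\in(0,R)$, where $C_1,\dots,C_4$ depend only on $m,M,B,\kappa$ (and on $R$).

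Since $\kappa > n-2$, the exponent $n-2\kappa$ is negative, so the last summand diverges as $\rho\searrow 0$ while the first summand grows in $\rho$. The balancing identity $\rho^{2\kappa-n+2}\sim \mathcal{D}^{-1}$ motivates the choice $\rho = \mathcal{D}^{-\lambda}$ with $\lambda := 1/(2\kappa-n+2)$, which makes both $\rho^2\mathcal{D}$ and $\rho^{\,n-2\kappa}$ of order $\mathcal{D}^{1-2\lambda} = \mathcal{D}^{(2\kappa-n)/(2\kappa-n+2)}$. Setting $\theta := \max\{n/(n+2),\,(2\kappa-n)/(2\kappa-n+2)\}$, the cross term $\rho\mathcal{D}^{1/2} = \mathcal{D}^{1/2-\lambda}$ has strictly smaller exponent, because $\lambda < 1/2$ is equivalent to $\kappa > n/2$, which holds since $\kappa > n-2 \geq n/2$ whenever $n\geq 5$. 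For small $\mathcal{D}$---specifically when $\mathcal{D}^{-\lambda}>R/2$---we simply fix $\rho = R/2$, and the four summands are each bounded by a constant determined by $m,M,B,\kappa,R$. Combining both regimes produces \eqref{eq: mathcalFDtheta} with the exponent $\theta\in(0,1)$ just exhibited; the constant absorbs the finite-$\mathcal{D}$ contribution into the additive $+1$.

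The genuinely hard part of the whole chain $-\mathcal{F}\leq C(\mathcal{D}^\theta+1)$ has already been handled in the preceding lemmas: it is the interior second-order estimate of Lemma~\ref{le: deltavL2} together with the decoupling of the mixed term $\int_\Omega uv$ in Lemma~\ref{le: uvL1smallball}, which convert energy into dissipation at the price of the tunable parameter $\rho$ and of boundary residuals controlled by the weighted pointwise estimates from Proposition~\ref{prop: pointwise estimate}. Given Lemma~\ref{le: 1/24mathcalF}, the remaining work for Lemma~\ref{le: mathcalFDtheta} is a scaling argument; the only points to verify carefully are that $(2\kappa-n)/(2\kappa-n+2)<1$ uniformly in the admissible range $\kappa>n-2$, that $n/(n+2)<1$ automatically, and that the auxiliary inequality $\lambda<1/2$ from the previous sentence is strict---all of which require nothing beyond the hypothesis $n\geq 5$ imposed in the statement.
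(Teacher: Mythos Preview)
Your proposal is correct and follows essentially the same route as the paper: both start from Lemma~\ref{le: 1/24mathcalF}, substitute the dissipation bounds, and then optimize in $\rho$ with a case distinction according to whether the optimal $\rho$ falls below $R/2$. The only minor difference is in the balancing scheme: the paper picks $\rho$ as a power of $\|\nabla f\|_{L^2(\Omega)}$ so that $\rho^{\,n-2\kappa}$ matches the term $\|\nabla f\|_{L^2(\Omega)}^{2n/(n+2)}$, leading to $\theta = \max\{n/(n+2),\,1 - \tfrac{2n}{(n+2)(2\kappa-n)}\}$, whereas you pick $\rho$ as a power of $\mathcal{D}$ so that $\rho^2\mathcal{D}$ matches $\rho^{\,n-2\kappa}$, yielding $\theta = \max\{n/(n+2),\,(2\kappa-n)/(2\kappa-n+2)\}$; both exponents lie in $(0,1)$ and either choice suffices.
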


 \begin{proof} 
 Put  
 \begin{displaymath}
  \rho :=
  \min\left\{\frac{R}{2}, \|\nabla f\|_{L^2(\Omega)}^{-\frac{n}{(n+2)(2\kappa-n)}}\right\}, 
 \end{displaymath}
 and let  
 \begin{displaymath}
  \theta := 
  \begin{cases} 
    \frac{n}{n+2} > 1- \frac{n}{(n+2)(2\kappa-n)} > 0, & \text{if } \kappa\in\big(n-2, \frac{3n}4\big),\\
    1- \frac{n}{(n+2)(2\kappa-n)} \geq \frac{n}{n+2}, & \text{if } \kappa\in \big[\frac{3n}4,\infty\big).
  \end{cases}
 \end{displaymath}
 We may infer the existence of $ C(m,M,B,\kappa) > 0$ from Lemma~\ref{le: 1/24mathcalF} such that 
 \begin{equation*}
  \begin{aligned}
    -\mathcal{F} 
    &\leq C(m,M,B,\kappa)(\rho^2\|\nabla f\|_{L^2(\Omega)}^2 
    + \|\nabla f\|_{L^2(\Omega)}^{2n/(n+2)}
    + \|g\|_{L^2(\Omega)}
    + \rho^{n-2\kappa})
  \end{aligned}
 \end{equation*}
 holds for all $(u,v)\in \mathcal{S}(m,M,B,\kappa)$.
 If $\rho < R/2$, then 
 \begin{equation}
  \label{eq: rho=R/2}
  \begin{split}
    -\mathcal{F} 
    &\leq C(m,M,B,\kappa)(\rho^2\|\nabla f\|_{L^2(\Omega)}^2 
    + \|\nabla f\|_{L^2(\Omega)}^{2n/(n+2)}
    + \|g\|_{L^2(\Omega)}
    + \|\nabla f\|_{L^2(\Omega)}^{n/(n+2)})\\
    &\leq C(m,M,B,\kappa)\left(3\|\nabla f\|_{L^2(\Omega)}^{2\theta} 
    + \|g\|_{L^2(\Omega)}^{2\theta} + 4\right)\\
    &\leq C(m,M,B,\kappa) (\mathcal{D}^\theta + 1),
  \end{split}
 \end{equation}
 by Young inequality and 
 monotonicity of the mapping $[0,\infty)\ni\eta\mapsto\eta^\theta\in[0,\infty)$.
 On the other hand, if $\rho = R/2$, then we can estimate analogously 
 \begin{equation}
  \label{eq: rho<R/2}
  \begin{split}
    - \mathcal{F} 
    &\leq C(m,M,B,\kappa)\left(\rho^2\|\nabla f\|_{L^2(\Omega)}^2 
    +  \|\nabla f\|_{L^2(\Omega)}^{2n/(n+2)}
    + \|g\|_{L^2(\Omega)}
    + \frac{2^{2\kappa-n}}{R^{2\kappa-n}}\right)\\
    &\leq C(m,M,B,\kappa)\left(2\|\nabla f\|_{L^2(\Omega)}^{2\theta} 
    + \|g\|_{L^2(\Omega)}^{2\theta} + 3
    + \frac{2^{2\kappa-n}}{R^{2\kappa-n}}\right)\\
    &\leq C(m,M,B,\kappa) (\mathcal{D}^\theta + 1).
  \end{split}
 \end{equation}
 Hence, \eqref{eq: mathcalFDtheta} follows from \eqref{eq: rho=R/2} and \eqref{eq: rho<R/2}.
\end{proof}

\section{Finite-time blowup enforced by low initial energy}
\label{sec: finite-time blowup} 

This section is devoted to the proof of Theorem~\ref{thm: low energy enforced finite-time blowup}. 
For initial data $(u_0, v_0)$ with low energy $\mathcal{F}(u_0,v_0)$, we may deduce a superlinear differential inequality for the mapping $t\mapsto - \mathcal{F}(u(\cdot, t),v(\cdot, t))$ from \eqref{eq: mathcalFDtheta},
which implies that $(u,v)$ cannot exist globally.

\begin{lemma}
  \label{le: low energy enforced finite-time blowup}
  Assume $\Omega = B_R \subset\mathbb R^n$ for some $n\geq5$ and $R>0$.
  Let $m > 0$, $A > 0$ and $\kappa > n-2$ be given.
  Then there exist positive constants $K(m,A, \kappa)$ and $T(m,A, \kappa)$ such that 
  given any $(u_0,v_0)$ from the set 
  \begin{equation}
    \label{sym: tilde B}
    \begin{split}
       \widetilde{\mathcal{B}}(m,A,\kappa) &:= \{(u_0,v_0)\in  C^0(\overline{\Omega}) \times C^2(\overline{\Omega})\mid 
        u_0 \text{ and } v_0 \text{ comply with } \eqref{h: initial data} 
       \text{ and }\\ 
       &\eqref{h: radial initial data} 
        \text{ such that } \|u_0\|_{L^1(\Omega)} = m, 
        \|v_0\|_{W^{2,2}(\Omega)} < A\\ 
        &\text{and } \mathcal{F}(u_0,v_0) < - K(m,A,\kappa)\},
    \end{split}
  \end{equation}
  for the corresponding classical solution with maximal existence time $T_{\max}(u_0,v_0)$ given by Proposition~\ref{prop: local existence and uniqueness},
  one has $T_{\max}(u_0,v_0) \leq T(m,A, \kappa)$.
\end{lemma}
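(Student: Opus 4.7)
The plan is to couple the functional inequality of Lemma~\ref{le: mathcalFDtheta} with the energy dissipation identity \eqref{eq: energyequation} to derive a closed superlinear ordinary differential inequality for $y(t) := -\mathcal{F}(u(\cdot,t),v(\cdot,t))$, and then to invoke an elementary blow-up comparison to force $T_{\max}$ to be short.

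First I would check that, for initial data $(u_0,v_0) \in \widetilde{\mathcal{B}}(m,A,\kappa)$, the corresponding classical solution from Proposition~\ref{prop: local existence and uniqueness} remains in the class $\mathcal{S}(m,M,B,\kappa)$ with constants $M$, $B$ depending only on $m$, $A$ and $\kappa$. Mass conservation \eqref{eq: mass identity} pins down the $L^1$-norm of $u$; the identity $f = -v_t$ together with the estimates \eqref{eq: vmassinequality} and \eqref{eq: vW2p} in Lemma~\ref{le: basic estimates} control $\int_\Omega v$ and $\int_\Omega |f|$ in terms of $m$ and $A$ alone; the weighted pointwise bounds \eqref{eq: pointwise constraint for v} follow from Proposition~\ref{prop: pointwise estimate} with constants depending only on $m$, $A$ and $\kappa$. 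This produces admissible $M = M(m,A)$ and $B = B(m,A,\kappa)$. Any gap between the $C^{2,1}$ smoothness granted by Proposition~\ref{prop: local existence and uniqueness} and the $C^1 \times C^3$ regularity built into $\mathcal{S}$ is closed by a routine parabolic--elliptic bootstrap at positive times.

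Having trapped the orbit inside $\mathcal{S}(m,M,B,\kappa)$, I would apply Lemma~\ref{le: mathcalFDtheta} along the trajectory to secure $\theta \in (0,1)$ and $C_1 = C_1(m,A,\kappa) > 0$ such that
\begin{equation*}
  y(t) \leq C_1\bigl(\mathcal{D}(u(\cdot,t),v(\cdot,t))^\theta + 1\bigr),
  \qquad t \in (0,T_{\max}),
\end{equation*}
while \eqref{eq: energyequation} ensures $y'(t) = \mathcal{D}(u(\cdot,t),v(\cdot,t)) \geq 0$. Setting $K(m,A,\kappa) := 2C_1$ and taking $y(0) > K$, monotonicity of $y$ gives $y(t) \geq 2C_1$ on $(0,T_{\max})$, which absorbs the unit term on the right-hand side and produces
\begin{equation*}
  y'(t) \geq (2C_1)^{-1/\theta}\, y(t)^{1/\theta}
  \qquad \text{for all } t \in (0,T_{\max}).
\end{equation*}
Because $1/\theta > 1$, separation of variables applied to the associated comparison equation shows that any such $y$ with $y(0) > K$ must escape to $+\infty$ no later than
\begin{equation*}
  T_\star := \frac{\theta}{1-\theta}\,(2C_1)^{1/\theta}\,y(0)^{-(1-\theta)/\theta}
  \leq \frac{\theta}{1-\theta}\,(2C_1)^{1/\theta}\,K^{-(1-\theta)/\theta} =: T(m,A,\kappa).
\end{equation*}
Finiteness and continuity of $y$ on $(0,T_{\max})$ therefore force $T_{\max}(u_0,v_0) \leq T(m,A,\kappa)$, as claimed.

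The genuinely substantive step is the first one: one must confirm that the constants entering $\mathcal{S}$, and thereby the constant $C_1$ produced by Lemma~\ref{le: mathcalFDtheta}, are uniform across the whole family $\widetilde{\mathcal{B}}(m,A,\kappa)$ rather than varying with the individual orbit. Once this uniformity is secured the remaining ODE comparison is essentially classical, and the choices of $K$ and $T$ are explicit.
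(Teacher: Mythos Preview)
Your proposal is correct and follows essentially the same approach as the paper: verify via Lemma~\ref{le: basic estimates} and Proposition~\ref{prop: pointwise estimate} that the orbit stays in $\mathcal{S}(m,M,B,\kappa)$ with uniform constants, apply Lemma~\ref{le: mathcalFDtheta}, and combine with the energy identity \eqref{eq: energyequation} to obtain a superlinear ODI for $-\mathcal{F}$ that forces finite-time blow-up. Your constant choice $K=2C_1$ is in fact slightly cleaner than the paper's $K=c_5$, since it directly yields a uniform bound $T(m,A,\kappa)$ independent of the particular initial datum.
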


\begin{proof}
  Let us fix $c_1>0$ such that 
  \begin{displaymath}
    \|\psi\|_{L^1(\Omega)} \leq c_1 \|\psi\|_{W^{2,2}(\Omega)} 
    \quad\text{for all } \psi\in W^{2,2}(\Omega).
  \end{displaymath}
  Lemma~\ref{le: basic estimates} and Proposition~\ref{prop: pointwise estimate} give $c_2 > 0$, $c_3 > 0$ and $c_4 = c_4(\kappa) > 0$ such that 
  whenever $(u_0,v_0)$ complies with \eqref{h: initial data}, the corresponding solution $(u,v,w)$ satisfies $L^1$ estimate 
  \begin{equation}
    \label{eq: vtL1c2}
    \|v_t\|_{L^1(\Omega)} \leq c_2(m + \|v_0\|_{W^{2,2}(\Omega)})
    \quad\text{for all } t \in (0, T_{\max}(u_0,v_0)),
  \end{equation}
    as well as pointwise estimates 
  \begin{equation}
    \||x|^{n-1}w\|_{W^{1,\infty}(\Omega)} 
    \leq c_3m
    \quad\text{for all } t \in (0, T_{\max}(u_0,v_0)),
  \end{equation}
  and  
  \begin{equation}
    \label{eq: xkappavw2inffty}
    \||x|^{\kappa-1} v\|_{W^{1,\infty}(\Omega)} 
    \leq c_4(m + \|v_0\|_{W^{2,2}(\Omega)})
  \end{equation}
  for all $t\in(0,T_{\max}(u_0,v_0))$.
  Now writing 
  \begin{align*} 
  B&:= \max\{c_1A, m, c_3m\} \quad\text{and} 
  \quad M := \max\{c_2(m + A), c_4(m + A)\},
  \end{align*}
  we invoke Lemma~\ref{le: mathcalFDtheta} to obtain $c_5 = C(m,M,B,\kappa) > 0$ and $\theta\in(0,1)$ such that 
  \begin{equation}
    \label{eq: mathcalFc5D}
    -\mathcal{F}(\tilde u, \tilde v) \leq c_5 (\mathcal{D}^\theta(\tilde u, \tilde v) + 1)
    \quad \text{for all } (\tilde u, \tilde v)\in\mathcal{S}(m,M,B,\kappa).
  \end{equation}
  We will show the desired conclusion holds if we define 
  \begin{equation}
    \label{eq: Kmackappa}
    K(m,A, \kappa) := c_5
  \end{equation}
  and 
  \begin{equation*}
    T(m,A, \kappa) := \frac{\theta c_5^{\frac{1}{\theta}}}{(1-\theta)(-\mathcal F(u_0,v_0) - c_5)^{\frac{1-\theta}{\theta}}}.
  \end{equation*}
  Indeed, we know from Proposition~\ref{prop: local existence and uniqueness} that for any given $(u_0,v_0)\in \widetilde{\mathcal{B}}(m,A, \kappa)$,
  the corresponding solution $(u,v,w)$ is smooth and radially symmetric with $u>0$ and $v>0$ in $\overline{\Omega}\times[0,T_{\max}(u_0,v_0))$.
  We infer from \eqref{eq: vtL1c2}--\eqref{eq: xkappavw2inffty} that 
  \begin{equation*}
    \|v_t\|_{L^1(\Omega)}\leq c_2(m + A) \leq M 
    \quad\text{for all } t \in (0, T_{\max}(u_0,v_0)),
  \end{equation*}
    as well as pointwise estimates 
  \begin{equation*}
    \||x|^{n-1}w\|_{W^{1,\infty}(\Omega)} \leq c_3m \leq B 
    \quad\text{for all } t \in (0, T_{\max}(u_0,v_0)),
  \end{equation*}
  and  
  \begin{equation*}
    \||x|^{\kappa-1} v\|_{W^{1,\infty}(\Omega)} 
    \leq c_4(m + A)
    \leq M 
  \end{equation*}
  for all $t\in(0,T_{\max}(u_0,v_0))$.
  Since moreover 
  $\int_{\Omega} u(x, t) =\int_{\Omega} u_{0} = m$ and 
  \begin{displaymath} 
  \int_\Omega v(x,t) 
    \leq \max\left\{\int_\Omega v_0, \int_\Omega u_0\right\} 
    \leq \max\{c_1A, m\} \leq B
  \end{displaymath}
  for all  $t\in(0,T_{\max}(u_0,v_0))$ by \eqref{eq: mass identity} and \eqref{eq: vmassinequality}, 
  it follows that $(u,v)\in \mathcal{S}(m,M,B,\kappa)$ for all $t\in(0,T_{\max}(u_0,v_0))$ and thus \eqref{eq: mathcalFc5D} may be applied to 
  $(\tilde{u},\tilde{v}) = (u(\cdot, t), v(\cdot,t))$ for all $t\in(0,T_{\max}(u_0,v_0))$.
  From \eqref{eq: energyequation} and \eqref{eq: Kmackappa}, we see 
  that 
  \begin{displaymath}
     \mathcal F(u(\cdot,t),v(\cdot,t)) 
    \leq \mathcal F(u_0,v_0) < - K(m,A, \kappa) = - c_5 
    \quad\text{for all } t \in (0, T_{\max}(u_0,v_0)),
  \end{displaymath}
  and thus we may invert \eqref{eq: mathcalFc5D} to get 
  \begin{equation}
    \label{eq: mathcalFODI}
    - \frac{\dd}{\dd t}\mathcal F = \mathcal D 
    \geq \left(\frac{-\mathcal F - c_5}{c_5}\right)^\frac{1}{\theta}
    \quad\text{for all } t \in (0, T_{\max}(u_0,v_0)).
  \end{equation}
  We shall see $T_{\max}(u_0,v_0) \leq T(m,A, \kappa)$.
  Otherwise a direct comparison argument by solving \eqref{eq: mathcalFODI} gives 
  \begin{displaymath}
    (-\mathcal F(u,v) - c_5)^{\frac{1-\theta}{\theta}} 
    \geq \frac{\theta c_5^{\frac{1}{\theta}}}{(1-\theta)\left(T(m,A, \kappa) - t\right)}
  \end{displaymath}
  for all $t\in(0,T(m,A, \kappa))$. 
  This warrants 
  \begin{displaymath}
  \mathcal F(u,v)\to-\infty \quad\text{as } t\nearrow T(m,A, \kappa),
  \end{displaymath} 
  which is incompatible with Proposition~\ref{prop: local existence and uniqueness}.
  We thus verify $T_{\max}(u_0,v_0) \leq T(m,A, \kappa)$ as desired.
\end{proof}

Now we are in a position to show Theorem~\ref{thm: low energy enforced finite-time blowup}.

\begin{proof}
  [Proof of Theorem~{\upshape\ref{thm: low energy enforced finite-time blowup}}]  
  We only need to fix an arbitrary $\kappa>n-2$ and apply Lemma~\ref{le: low energy enforced finite-time blowup} to find that the conclusion holds if we let $K(m, A):=K(m, A, \kappa)$ and $T(m, A):=T(m,A, \kappa)$ with $K(m, A, \kappa)$ and $T(m,A, \kappa)$ given by Lemma~\ref{le: low energy enforced finite-time blowup}.
\end{proof}

\section{Initial data with large negative energy}
\label{sec: initial data}

This section is devoted to the existence of initial data with low energy. 

\begin{lemma}
  \label{le: a family initial data}
  Let $n\geq5$ and $\Omega \subset \mathbb R^n$ be a bounded and smooth domain. 
  Then for any choice of function pair $(u_0,v_0)$ satisfying \eqref{h: initial data},
  one can find a sequence
  \begin{displaymath}
    \{(u_k, v_k) \in L^\infty(\Omega) \times W^{2,\infty}(\Omega)\}_{k\in\mathbb{N}}
  \end{displaymath}
  of function couples such that 
  $(u_k, v_k)$ satisfies \eqref{h: initial data} for all $k\in\mathbb{N}$,
  that 
  \begin{equation}
    \label{eq: intueta=m}
    \int_\Omega u_k = \int_\Omega u_0 
    \quad \text{for all } k\in\mathbb{N},
  \end{equation}  
  that 
  \begin{equation}
    \label{eq: vetatov0}
    v_k \to v_0 \quad \text{in } W^{2, 2}(\Omega) 
    \text{ as } k\to\infty,
  \end{equation}
  that 
  \begin{equation}
    \label{eq: uetatou0}
    u_k \to u_0 \quad \text{in } L^p(\Omega) 
    \text{ for all } p\in\bigg[1,\frac{2n}{n+4}\bigg)
    \text{ as } k\to\infty,
  \end{equation}
  and that 
  \begin{equation}
    \label{eq: mathcalFuetaveta}
    \mathcal F(u_k, v_k)\to - \infty,\quad \text{as } k \to 0,
  \end{equation}
  where $\mathcal F$ is taken from \eqref{sym: mathcalFD}.
\end{lemma}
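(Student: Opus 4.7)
The plan is to take $(u_\eta, v_\eta)$ to be concentrated perturbations of $(u_0, v_0)$ near the origin. The scales are tuned so that the $W^{2,2}$ distance of $v_\eta$ from $v_0$ and the $L^p$ distance of $u_\eta$ from $u_0$ (for every $p < 2n/(n+4)$) vanish, yet the cross term $\int u_\eta v_\eta$ in $\mathcal{F}$ grows logarithmically as $\eta\searrow 0$. The critical exponent $(n-4)/2$ enters both convergence thresholds and the dominant energy estimate, so a logarithmic decoupling will be needed.

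Fix a radial nonnegative $\phi\in C_c^\infty(B_1)$ with $\int\phi=1$ and set $\phi_\eta(x):=\eta^{-n}\phi(x/\eta)$. Pick any $\alpha\in\bigl((n-4)/2,(n-2)/2\bigr)$, a nonempty interval since $n\geq 5$, and build a nonnegative radial $h_\eta\in C^\infty(\overline{\Omega})$ that equals $|x|^{-\alpha}$ on $\{2\eta\leq|x|\leq R/4\}$, the constant $(2\eta)^{-\alpha}$ on $B_\eta$, and vanishes on $\{|x|\geq R/2\}$; the last property enforces Neumann compatibility. Define
\[
u_\eta:=u_0+m_\eta\bigl(\phi_\eta-|\Omega|^{-1}\bigr),\qquad v_\eta:=v_0+c_\eta h_\eta,
\]
with $m_\eta:=\eta^{(n-4)/2}|\log\eta|^{2}$ and $c_\eta:=\eta^{\alpha-(n-4)/2}/|\log\eta|$. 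Then $\int_\Omega u_\eta=\int_\Omega u_0$ holds by construction, and for all sufficiently small $\eta$ both functions are positive, radially symmetric and satisfy the boundary condition in \eqref{h: initial data}, supplying \eqref{eq: intueta=m} and the regularity required.

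Scaling computations in spherical coordinates give $\|h_\eta\|_{L^2}^{2}+\|\nabla h_\eta\|_{L^2}^{2}=O(1)$ and $\|\Delta h_\eta\|_{L^2}^{2}=O(\eta^{n-4-2\alpha})$, whence
\[
\|v_\eta-v_0\|_{W^{2,2}}^{2}\lesssim c_\eta^{2}\,\eta^{n-4-2\alpha}=|\log\eta|^{-2}\to 0,
\]
which is \eqref{eq: vetatov0}. Since $\|\phi_\eta\|_{L^p}\simeq\eta^{n(1/p-1)}$,
\[
\|u_\eta-u_0\|_{L^p}\lesssim m_\eta\bigl(\eta^{n(1/p-1)}+1\bigr)\leq C|\log\eta|^{2}\,\eta^{(n-4)/2-n(p-1)/p}+o(1),
\]
whose exponent is strictly positive whenever $p<2n/(n+4)$; this yields \eqref{eq: uetatou0}.

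For \eqref{eq: mathcalFuetaveta}, I split $\mathcal{F}$ into the three terms in \eqref{sym: mathcalFD}. The third converges to $\tfrac{1}{2}\int|(-\Delta+1)v_0|^{2}$ by the $W^{2,2}$ convergence just established. Outside $B_\eta$, $u_\eta\to u_0$ uniformly, so $\int_{\Omega\setminus B_\eta}u_\eta\log u_\eta\to\int u_0\log u_0$; inside $B_\eta$, where $u_\eta\approx m_\eta\phi_\eta\sim m_\eta\eta^{-n}$, a direct computation gives
\[
\int_{B_\eta}u_\eta\log u_\eta=m_\eta(\log m_\eta-n\log\eta)+O(m_\eta)=O\bigl(\eta^{(n-4)/2}|\log\eta|^{3}\bigr)\to 0.
\]
The interaction term, however, diverges: since $h_\eta\geq(2\eta)^{-\alpha}$ on $\mathrm{supp}\,\phi_\eta\subset B_\eta$,
\[
\int_\Omega u_\eta v_\eta\geq\int_{B_\eta}m_\eta\phi_\eta\,c_\eta h_\eta-O(1)\gtrsim m_\eta c_\eta\eta^{-\alpha}-O(1)=|\log\eta|-O(1).
\]
Assembling the three contributions gives $\mathcal{F}(u_\eta,v_\eta)\to-\infty$. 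The main obstacle is precisely this triple balance: the critical scale $\eta^{(n-4)/2}$ governs both the $L^p$ threshold on $u_\eta$ and, through $c_\eta$, the $W^{2,2}$ threshold on $v_\eta$, so the construction must sit exactly at that scaling and rely on logarithmic slack to make the interaction $m_\eta c_\eta\eta^{-\alpha}$ diverge while keeping both perturbations infinitesimal in the prescribed norms.
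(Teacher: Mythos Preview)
Your argument is correct and follows the same overall strategy as the paper: add concentrated, radially symmetric perturbations to $(u_0,v_0)$ near the origin, with scales tuned so that the cross term $\int u_\eta v_\eta$ diverges while the entropy and the quadratic part of $\mathcal{F}$ remain under control. Your scaling computations and the balance leading to the threshold $p<2n/(n+4)$ check out.

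The differences from the paper are tactical rather than conceptual. The paper perturbs \emph{both} components by the \emph{same} rescaled bump $\phi(x/\eta)$, taking
\[
u_\eta=u_0+\Bigl(\ln\tfrac1\eta\Bigr)^{2\gamma}\eta^{-n/2-2}\phi\Bigl(\tfrac{x}{\eta}\Bigr)-\text{const.},\qquad
v_\eta=v_0+\Bigl(\ln\tfrac1\eta\Bigr)^{-\gamma}\eta^{2-n/2}\phi\Bigl(\tfrac{x}{\eta}\Bigr),
\]
with a single free parameter $\gamma>1$; the cross term then grows like $(\ln\tfrac1\eta)^{\gamma}$ and the entropy is bounded only crudely by $O(\ln\tfrac1\eta)$, so $\gamma>1$ is what makes the cross term win. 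You instead use a capped power profile $h_\eta\sim|x|^{-\alpha}$ for the $v$-perturbation and a sharper entropy estimate (the bump entropy tends to zero because its mass $m_\eta$ vanishes), so a single logarithmic gain $|\log\eta|$ in the cross term already suffices. Each route works; the paper's is a bit more economical (no auxiliary exponent $\alpha$, no separate profile $h_\eta$, fewer transition regions to estimate), while yours extracts slightly more from the entropy term. Neither approach yields something the other does not.
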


\begin{proof}
  Without loss of generality, we may assume that $B_R \subset \Omega$ for some $R>0$.
  Let $\phi\in C_0^\infty(\mathbb R^n)$ be a radially symmetric and nonnegative function with $\int_{\mathbb R^n}\phi = 1$, 
which is compactly supported in $B_1$.
Fixing $\gamma > 0$,
we define for $k\in\mathbb{N}$,
  \begin{displaymath}
    u_k(x) := u_0(x) 
    + \left(\ln\frac1\eta\right)^{2\gamma}\eta^{-\frac{n}{2}-2}\phi\left(\frac{x}{\eta}\right) 
    - \left(\ln\frac1\eta\right)^{2\gamma}\eta^{\frac{n}{2}-2}\cdot\frac{u_0(x)}{\int_\Omega u_0},
    \quad x\in\overline{\Omega},
  \end{displaymath}
  and 
  \begin{displaymath}
    v_k(x) := v_0(x) 
    + \left(\ln\frac1\eta\right)^{-\gamma} 
    \eta^{2-\frac{n}{2}}
    \phi\left(\frac{x}{\eta}\right),\quad x\in\overline{\Omega},
  \end{displaymath}
  where %
  \begin{displaymath}
    \eta = \eta(k) := \frac{\eta_\star}{k+1}, \quad k\in\mathbb{N},
  \end{displaymath}
  and $\eta_\star \in (0,1)\cap(0,R)$ is fixed such that  
  \begin{equation}
    \label{eq: etafornonnegative}
    \eta^{\frac{n}{2}-2}\left(\ln\frac1\eta\right)^{2\gamma} 
    < \int_\Omega u_0 
    \quad \text{for all } k\in\mathbb{N}.
  \end{equation}
A direct computation can verify the mass identity \eqref{eq: intueta=m}.
Since 
\begin{displaymath}
  \|v_k-v_0\|_{W^{2,2}(\Omega)}
  \leq \left(\ln\frac1\eta\right)^{-\gamma} \|\phi\|_{W^{2,2}(B_1)}
\end{displaymath}
are valid for all $k\in\mathbb{N}$,
$\gamma > 0$ ensures \eqref{eq: vetatov0}.
We estimate by triangle inequality,
\begin{displaymath}
  \begin{split}
    \|u_k - u_0\|_{L^p(\Omega)} 
    &\leq \left(\ln\frac1\eta\right)^{2\gamma} \eta^{-2-\frac{n}{2}+\frac{n}{p}}\|\phi\|_{L^p(B_1)}\\
    &\quad + \left(\ln\frac1\eta\right)^{2\gamma}\eta^{\frac{n}{2}-2}\cdot\frac{\|u_0\|_{L^p(\Omega)}}{\int_\Omega u_0}
    \quad \text{for all } k\in\mathbb{N} \text{ and } p\in[1,\infty).
  \end{split}
\end{displaymath}
Then \eqref{eq: uetatou0} follows due to $-2-n/2+n/p>0$ and $n>4$.
To verify \eqref{eq: mathcalFuetaveta},
we evaluate each term from $\mathcal{F}$.
Young inequality entails 
\begin{equation}
  \label{eq: veta2}
  \int_\Omega |v_k|^2 
  \leq 2\|v_0\|_{L^2(\Omega)}^2 
  + 2 \left(\ln\frac1\eta\right)^{-2\gamma}\eta^{4}\|\phi\|_{L^2(B_1)}^2
  \quad \text{for all } k\in\mathbb{N}
\end{equation}
and 
  \begin{equation}
    \label{eq: deltav2}
    \int_\Omega |\Delta v_k|^2 
    \leq 2\|\Delta v_0\|_{L^2(\Omega)}^2 
    + 2\left(\ln\frac1\eta\right)^{-2\gamma}\|\Delta \phi\|_{L^2(B_1)}^2
    \quad \text{for all } k\in\mathbb{N}.
  \end{equation}
Thanks to \eqref{eq: etafornonnegative}, 
$u_k$ is nonnegative for all $k\in\mathbb{N}$,
thus we can drop the nonnegative summands and estimate from below 
  \begin{equation}
  \label{eq: uetaveta}
    \begin{split}
      \int_\Omega u_k v_k 
      &\geq \left(\ln\frac1\eta\right)^{\gamma}\int_\Omega \phi^{2}\left(\frac{x}{\eta}\right)\eta^{-n}\dd x \\
      &= \left(\ln\frac1\eta\right)^{\gamma} \|\phi\|_{L^2(B_1)}^2
      \quad \text{for all } k\in\mathbb{N}.
    \end{split}
  \end{equation}
  Noting that 
  \begin{displaymath}
    \xi^q -\xi \geq (q-1)\xi\ln\xi
    \quad \text{for all } \xi\in(0,\infty) \text{ and } q \in (1,\infty),
  \end{displaymath}
  we compute 
  \begin{equation}
    \label{eq: ln1+ln1/eta}
  \begin{split} 
  \int_\Omega u_k\ln u_k 
  &\leq \int_\Omega \frac{u_k^q-u_k}{q-1}
  \leq \int_\Omega \frac{u_k^q}{q-1}\\ 
  &\leq \frac{2^{q-1}}{q-1}\bigg(\|u_0\|_{L^q(\Omega)}^q 
  + \Big(\ln\frac{1}{\eta}\Big)^{2q\gamma}\eta^{-\frac{nq}{2}-2q + n}\|\phi\|_{L^q(B_1)}^q\bigg)
  \end{split}
\end{equation}
for all $k\in\mathbb{N}$ and $q\in(1,\infty)$.
Collecting \eqref{eq: veta2}--\eqref{eq: ln1+ln1/eta} gives  
\begin{equation*}
  \begin{split} 
  \mathcal{F}(u_k, v_k)
  &= \int_{\Omega}(u_k \log u_k - u_k v_k) 
  + \frac{1}{2} \int_{\Omega}|(-\Delta+1) v_k|^2\\ 
  &\leq - \left(\ln\frac1\eta\right)^{\gamma} \|\phi\|_{L^2(B_1)}^2
  + 2 \left(\ln\frac1\eta\right)^{-2\gamma}(\|\Delta \phi\|_{L^2(B_1)}^2 + \|\phi\|_{L^2(B_1)}^2)\\
  &\quad + \frac{2^{q-1}}{q-1}\bigg(\ln\frac{1}{\eta}\bigg)^{2q\gamma}\eta^{-\frac{nq}{2}-2q + n}\|\phi\|_{L^q(B_1)}^q\\
  &\quad + 2\|\Delta v_0\|_{L^2(\Omega)}^2 
  + 2\|v_0\|_{L^2(\Omega)}^2 
  + \frac{2^{q-1}}{q-1}\|u_0\|_{L^q(\Omega)}^q 
  \end{split}
\end{equation*}
for all $k\in\mathbb{N}$
and 
\begin{displaymath}
  q\in\bigg(1,\frac{2n}{n+4}\bigg).
\end{displaymath}
In view of $\gamma > 0$, 
we have $\mathcal{F}(u_k, v_k)\to-\infty$ as $k\to\infty$ and complete the proof.
\end{proof}

Now we are in a position to show Theorem~\ref{thm: dense initial data for blowup}.

\begin{proof}[Proof of Theorem~{\upshape\ref{thm: dense initial data for blowup}}]
  Let initial datum $(u_0,v_0)$ be specified as in Theorem~\ref{thm: dense initial data for blowup}.
Then the sequence $\{(u_k, v_k)\}_{k\in\mathbb{N}}$ of positive function pairs constructed in Lemma~\ref{le: a family initial data}
clearly satisfies the desired properties according to \eqref{eq: intueta=m}--\eqref{eq: mathcalFuetaveta}. 
\end{proof}

\backmatter

\bmhead*{Acknowledgements}
The authors thank the anonymous referees for their helpful
comments and suggestions, which greatly improve the presentation of our paper.

\bmhead*{Funding}
The first author has been supported by ``the Fundamental Research Funds for the Central Universities'' (No.~B250201215).
The second author has been supported in part by National Natural Science Foundation of China (No. 12271092, No. 11671079) 
and the Jiangsu Provincial Scientific Research Center of Applied Mathematics under Grant No. BK20233002.

\bmhead*{Conflict of interest} 

The authors have not disclosed any conflict of interest.

\bibliography{ks-isp-fb-amo}%

\end{document}